\definecolor{dark-blue}{rgb}{0.15,0.15,0.4}
\declaretheorem[style=plain,numberwithin=section]{theorem}
\declaretheorem[style=plain,numberlike=theorem]{lemma}
\declaretheorem[style=plain,numberlike=theorem]{proposition}
\declaretheorem[style=plain,numberlike=theorem]{corollary}
\declaretheorem[style=remark,numberlike=theorem]{remark}
\declaretheorem[style=definition,numberlike=theorem]{definition}
\declaretheorem[style=definition,numberlike=theorem]{example}
\newcommand{\proofsubheading}[1]{%
  \par
  \addvspace{\medskipamount}%
  \noindent\emph{#1}\par\nobreak
  \addvspace{\smallskipamount}%
  \@afterheading
}
\DeclareMathAlphabet{\mathcal}{OMS}{cmsy}{m}{n}
\newcommand{\G}{\mathcal{G}}
\newcommand{\Hscr}{\mathcal{H}}
\NewDocumentCommand\set{s m}{%
    \IfBooleanTF#1%
    {\left\{ #2 \right\}}%
    {\{#2\}}%
}
\NewDocumentCommand\setbuild{s m m}{%
    \IfBooleanTF#1%
    {\ensuremath{\left\{\, #2 \, \middle| \, #3 \,\right\}}}%
    {\ensuremath{\{\, #2 \, \mid \, #3 \,\}}}%
}
\DeclarePairedDelimiter{\floor}{\lfloor}{\rfloor}
\DeclarePairedDelimiter{\paren}{(}{)}
\DeclarePairedDelimiter{\abs}{\lvert}{\rvert}
\DeclarePairedDelimiter{\sqbra}{[}{]}
\let\oldfloor\floor
\def\floor{\@ifstar{\oldfloor}{\oldfloor*}}
\let\oldparen\paren
\def\paren{\@ifstar{\oldparen}{\oldparen*}}
\let\oldabs\abs
\def\abs{\@ifstar{\oldabs}{\oldabs*}}
\let\oldsqbra\sqbra
\def\sqbra{\@ifstar{\oldsqbra}{\oldsqbra*}}
\newcommand{\F}{\mathbb{F}}
\newcommand{\iso}{\cong}
\LetLtxMacro{\origfaktor}{\faktor}
\DeclareDocumentCommand{\gapfaktor}{s m O{0.65} m O{-0.65}}{
  \setbox0=\hbox{\ensuremath{#2}}
  \setbox2=\hbox{\ensuremath{#4}}
  \raisebox{#3\ht0}{\usebox0}
  \mkern-5mu%
  {\rotatebox{-45}{\rule[#5\ht2]{0.4pt}{-#5\ht2+#3\ht0+\ht0-1pt}}} 
  \mkern-4mu%
  \raisebox{#5\ht2}{\usebox2}
}
\renewcommand{\faktor}{\gapfaktor}
\newcommand{\tensor}{\otimes}
\DeclareMathOperator{\Hom}{Hom}
\DeclareMathOperator{\Sym}{Sym}
\DeclareMathOperator{\soc}{soc}
\newcommand{\projfree}[1]{\Omega^0\left(#1\right)}
\DeclareMathOperator{\SL}{SL}
\DeclareMathOperator{\mean}{mean}
\let\@@pmod\pmod
\DeclareRobustCommand{\pmod}{\@ifstar\@pmods\@@pmod}
\def\@pmods#1{\mkern4mu({\operator@font mod}\mkern 6mu#1)}
\newlength{\truelen}
\newcommand{\padbox}[3][c]{%
    \settowidth{\truelen}{\ensuremath{#2}}%
    \ifdim\truelen < #3%
        \makebox[#3][#1]{\ensuremath{#2}}%
    \else%
        \ensuremath{#2}%
    \fi%
}
\newlength{\padlen}
\newcommand{\pad}[1][]{\padbox[c]{#1}{\padlen}}
\newlength{\edgepadlen}
\newcommand{\edgepad}[1][]{\padbox[c]{#1}{\edgepadlen}}
\newlength{\minlen}
\newcommand{\flexbox}[2]{%
    \settowidth{\minlen}{\ensuremath{#2}}%
    \padbox[l]{#1}{\minlen}%
}
\newcommand{\longsubsum}[3]{%
    \flexbox{%
        \smashoperator[r]{#1_{#2}}\;#3%
    }{%
        {\scriptstyle#2}%
    }%
}
\DeclareMathOperator{\im}{im}
\newcommand{\blank}{{-}}
\newcommand{\one}[1]{\left[#1\right]}
\renewcommand{\epsilon}{\varepsilon}
\renewcommand{\phi}{\varphi}
\renewcommand{\leq}{\leqslant}
\renewcommand{\geq}{\geqslant}
\renewcommand{\emptyset}{\varnothing}
\newcommand{\p}{\mathfrak{p}}
\newcommand{\nmstring}[2]{\langle#1,#2\rangle}
\newcommand{\nmstr}{\nmstring{n}{m}}
\newcommand{\multiplicity}[2]{[#2 : #1]}
\newcommand{\simpledivide}[3]{\text{\(V_{#1}\) is a summand of \(V_{#2} \tensor V_{#3}\)}}
\newcommand{\algclosedfield}{k}
\newcommand{\primesubfield}{\F_p}
\title[A random walk on indecomposable representations of \(\SL_2(\primesubfield)\)]
{A random walk on the indecomposable \\ summands of tensor products of modular representations of \(\SL_2(\primesubfield)\)}
\author{Eoghan McDowell}
\address{Department of Mathematics, Royal Holloway, University of London \\ Egham, Surrey, TW20~0EX, United Kingdom}
\email{\textcolor{dark-blue}{\href{mailto:eoghan.mcdowell.2018@rhul.ac.uk}{eoghan.mcdowell.2018@rhul.ac.uk}}}
\keywords{Modular representations, tensor products, special linear group, Clebsch--Gordan, Markov chains}
\subjclass[2010]{20C20, 20C33, 60J10, 60B99}
\begin{document}

\begin{abstract}
In this paper we introduce a novel family of Markov chains on the simple representations of \(\SL_2(\F_p)\) in defining characteristic, defined by tensoring with a fixed simple module and choosing an indecomposable non-projective summand. 
We show these chains are reversible and find their connected components and their stationary distributions.
We draw connections between the properties of the chain and the representation theory of \(\SL_2(\primesubfield)\), emphasising symmetries of the tensor product.
We also provide an elementary proof of the decomposition of tensor products of simple \(\SL_2(\primesubfield)\)-representations.

\end{abstract}

\maketitle

\section{Introduction}

This paper introduces a novel family of Markov chains on the simple representations of \(\SL_2(\F_p)\) in defining characteristic.
The chains are defined by, beginning at some simple module, tensoring by a fixed simple module and choosing a non-projective indecomposable summand with probability depending on a weighting given to each indecomposable module.

We analyse properties of this Markov chain in correspondence with the representation theory of \(\SL_2(\F_p)\), and in particular with the tables of multiplicities of simple modules as indecomposable summands of tensor products.
We show that our chain is reversible as a result of these tables being symmetric about the diagonal.
We show that, depending on the parity of the dimension of the fixed simple module, our chain is either irreducible with period \(2\) or consists of two irreducible aperiodic chains, as a result of these tables having rotational symmetry.
We also compute stationary distributions and mixing times.

If we were to replace \(\SL_2(\F_p)\) with an arbitrary group, it may not be clear how many states our chain has, or whether that number is finite.
For \(\SL_2(\F_p)\), however, we can deduce that there are only finitely many states even before calculating any decompositions: since \(\SL_2(\F_p)\) has a cyclic Sylow \(p\)-subgroup, all its simple representations are algebraic (in the sense of satisfying a polynomial over \(\mathbb{Z}\) in the Green ring), and hence their tensor powers collectively have only finitely many summands \cite[Lemma 1.1, Corollary 1.6]{craven2008thesis}.
We will see that furthermore the states are precisely the non-projective simple representations.

The study of this random walk is inspired by Benkart--Diaconis--Liebeck--Tiep \cite{benkart2020tensor}, who consider chains defined by choosing composition factors of tensor products.
Our walk differs by considering indecomposable summands rather than composition factors, and by permitting tensoring by any choice of fixed simple module (whereas Benkart--Diaconis--Liebeck--Tiep focus on tensoring by the natural module).

In our definition we ignore projective summands because tensoring by a projective module always yields a projective module, and so if they were included the projective modules would form an absorbing set.
Our definition allows us to consider a recurrent chain on the (non-projective) simple modules.
If we were to replace \(\SL_2(\primesubfield)\) with an algebraic group \(\SL_2(k)\), then this ``reduced'' tensor product would be the tensor product of a so-called fusion category of the kind studied in \cite[Section 2]{andersen1995}, and our random walk can be described in terms of fusion rules.
In this setting, the useful symmetries of the table of multiplicities are given by \cite[Axiom 3, p.~183]{mathieu2000} or by the modular Verlinde formula \cite[Theorem 9.5]{mathieu2000}.

In order to compute the transitions of our random walk, we require knowledge of the decomposition of tensor products of simple modules into indecomposable modules.
Such a decomposition is known as a Clebsch--Gordan rule;
the statement of the rule in our case is given in \autoref{thm:CG}.
This rule can be found by inductively tensoring by the natural representation of \(\SL_2(\F_p)\) (see \cite[(5.5) and (6.3)]{GLOVER1978425} or \cite[Corollary 1.2(a) and Proposition 1.3(c)]{Kouwenhoven1990}), or from tilting theory (see \cite[Lemma 4]{erdmann_henke_2002}).

We give our own proof of the Clebsch--Gordan rule in \autoref{section:sess}.
Our proof is novel and elementary, and is included for the purpose of making our analysis self-contained and demonstrating that the machinery of tilting theory is not required.
Our proof introduces a new family of maps (\autoref{def:lambda}) which exhibit explicit submodules and quotients of the tensor products and identifies cases in which they split; this results in a proof which finds the projective summands more efficiently than \cite{GLOVER1978425} and \cite{Kouwenhoven1990} without the theory required by \cite{erdmann_henke_2002}.
We give a convenient pictorial way to compute these decompositions in \autoref{eg:tensor_of_simples}.

The structure of this paper is as follows.
The simple modules are introduced in \autoref{section:background}.
Our proof of the Clebsch--Gordan rule is given in \autoref{section:sess}.
\autoref{section:matrix} studies the graph on which our Markov chain is a random walk, and
\autoref{section:random_walks} studies the Markov chain itself.

Throughout this paper, \(p\) denotes a prime, \(\algclosedfield\) denotes an algebraically closed field of characteristic \(p\),
and \(\primesubfield\) denotes its prime subfield.
All modules are assumed to be finite-dimensional over \(k\).
We write \(G = \SL_2(\F_p)\), the group of \(2 \times 2\) matrices with entries in \(\primesubfield\) with determinant \(1\).
We make use of Iverson bracket notation, and the notation \([r] = \set{1, 2, \ldots, r}\).
We also introduce the following notation for a family of sets that will index the summands of tensor products.
\begin{definition}
For \(n \geq m \geq 1\), let the \emph{\((n,m)\)-string} be the set
\[
\nmstring{n}{m} = \set{n+m-1, n+m-3, \ldots, n-m+3, n-m+1},
\]
and let \(\nmstring{n}{0} = \emptyset\).
\end{definition}

\section{Simple representations}
\label{section:background}

In this section we give an explicit model for the simple representations of \(G\), and state some basic facts about them and their projective covers.

\begin{definition}
For \(n \geq 1\),
let \(V_n\) be the \(n\)-dimensional \(\algclosedfield G\)-module consisting of homogeneous polynomials over \(\algclosedfield\) of degree \(n-1\) in two variables \(X\) and \(Y\), with \(G\)-action given by
\[
    \begin{psmallmatrix}
            a & b \\ c & d \\
    \end{psmallmatrix}
    f(X, Y) = f(aX+cY, bX+dY).
\]
Let \(P_n\) be the projective cover of \(V_n\).
\end{definition}

Note that \(V_1\) is the trivial module, \(V_2\) is isomorphic to the natural \(\algclosedfield G\)-module, and that \(V_n \iso \Sym^{n-1} V_2\).

The representations \(V_1, \ldots, V_p\) are simple \cite[pp.~14--16]{alperin_1986}.
Furthermore, the set \(\setbuild{V_n}{1 \leq n \leq p}\) is a complete set of simple \(\algclosedfield G\)-modules up to isomorphism, since the number of \(p\)-regular conjugacy classes in \(G\) is \(p\).
In particular, there is a unique simple \(\algclosedfield G\)-module of each dimension less than or equal to \(p\), and so the simple modules are self-dual.
Also, it follows that the set \(\setbuild{P_n}{1 \leq n \leq p}\) is a complete set of projective indecomposable \(\algclosedfield G\)-modules.

The projective indecomposable \(\algclosedfield G\)-modules are constructed in \cite[pp.~48--52]{alperin_1986} (using the special case \(m=2\) of our \autoref{prop:mu_ses}).
We describe them here.
Firstly, \(P_p \iso V_p\) is projective and simple.
When \(p=2\), there is only one other projective indecomposable module: \(P_1\), which is of Loewy length \(2\) (and hence has composition factors only \(V_1\)).
For \(p > 2\), all other projective indecomposable modules have Loewy length \(3\), and so the only structural information which is missing is their hearts.
The heart of \(P_1\) is \(V_{p-2}\), the heart of \(P_{p-1}\) is \(V_{2}\), and for \(2 \leq n \leq p-2\) the heart of \(P_n\) is \(V_{p-n-1} \oplus V_{p-n+1}\); these structures are illustrated below.
\tikzset{
    submodule/.style={circle,draw, minimum size=2mm, inner sep=0}
}
\begin{center}
\begin{tikzpicture}[
    auto,
    node distance=1.5cm,
]
    \node (top1) at (-5.25, 0) [submodule, label=left:\(P_1\)] {};
    \node (rad1) [below of=top1, submodule] {};
    \node (soc1) [below of=rad1, submodule] {};
    \node (zero1) [below of=soc1, submodule, label=left:\(0\)] {};
    
    \draw (top1) -- node {\(V_1\)} (rad1);
    \draw (rad1) -- node {\(V_{p-2}\)}(soc1);
    \draw (soc1) -- node {\(V_1\)} (zero1);

    \node (top2) at (-2.25,0) [submodule, label=left:\(P_2\)] {};
    \node (rad2) [below of=top2, submodule] {};
    \node (left2) at (-3, -2.25) [submodule] {};
    \node (right2) at (-1.5, -2.25) [submodule] {};
    \node (soc2) [below of=rad2, submodule] {};
    \node (zero2) [below of=soc2, submodule, label=left:\(0\)] {};
    
    \draw (top2) -- node {\(V_2\)} (rad2);
    \draw (rad2) -- (left2);
    \draw (rad2) -- (right2);
    \draw (left2) -- node [pos=0.5, swap] {\(V_{p-3}\)} (soc2);
    \draw (right2) -- node [pos=0.5] {\(V_{p-1}\)} (soc2);
    \draw (soc2) -- node {\(V_2\)} (zero2);
    
    \path (-0.5,-2.250) -- node[auto=false]{\(\cdots\)} (0.5,-2.25);
    
    \node (top3) at (2.25,0) [submodule, label=left:\(P_{p-2}\)] {};
    \node (rad3) [below of=top3, submodule] {};
    \node (left3) at (1.5, -2.25) [submodule] {};
    \node (right3) at (3, -2.25) [submodule] {};
    \node (soc3) [below of=rad3, submodule] {};
    \node (zero3) [below of=soc3, submodule, label=left:\(0\)] {};
    
    \draw (top3) -- node {\(V_{p-2}\)} (rad3);
    \draw (rad3) -- (left3);
    \draw (rad3) -- (right3);
    \draw (left3) -- node [pos=0.5, swap] {\(V_{1}\)} (soc3);
    \draw (right3) -- node [pos=0.5] {\(V_{3}\)} (soc3);
    \draw (soc3) -- node {\(V_{p-2}\)} (zero3);
    
    \node (top4) at (5.25,0) [submodule, label=left:\(P_{p-1}\)] {};
    \node (rad4) [below of=top4, submodule] {};
    \node (soc4) [below of=rad4, submodule] {};
    \node (zero4) [below of=soc4, submodule, label=left:\(0\)] {};
    
    \draw (top4) -- node {\(V_{p-1}\)} (rad4);
    \draw (rad4) -- node {\(V_{2}\)}(soc4);
    \draw (soc4) -- node {\(V_{p-1}\)} (zero4);
\end{tikzpicture}
\end{center}
Note that \(P_1\) and \(P_p\) are both \(p\)-dimensional, while all other projective indecomposable \(\algclosedfield G\)-modules are \(2p\)-dimensional.


\section{Clebsch--Gordan Rule}
\label{section:sess}

In this section we provide an elementary proof of the decomposition of tensor products of simple \(kG\)-modules into indecomposable summands.
Our approach is to consider two families of maps: the multiplication maps \(\mu\), and a novel family of maps \(\lambda\).

We begin by considering the multiplication maps.

\begin{definition}
Let \(\mu \colon V_n \tensor V_m \to V_{n+m-1}\) be the multiplication map, defined by \(\algclosedfield\)-linear extension of \(\mu(f \tensor g) = fg\).
The dependence of \(\mu\) on \(n\) and \(m\) is suppressed, since it is always clear from context.
\end{definition}

It is easily seen that \(\mu\) is surjective and \(G\)-equivariant.
The following result identifying the kernel of \(\mu\) is well-known (see \cite[(5.1)]{GLOVER1978425}, or for the case \(m=2\) \cite[Lemma 5, p.~50--51]{alperin_1986} or \cite[Proposition 1.2(a)]{Kouwenhoven1990}).

\begin{proposition}\label{prop:mu_ses}
Suppose \(n,m \geq 2\).
Then the kernel of \(\mu\) is isomorphic to \(V_{n-1} \tensor V_{m-1}\), and hence there is a short exact sequence
\[
\begin{tikzcd}
    0 \arrow[r] 
        & V_{n-1} \tensor V_{m-1} \arrow[r]
        & V_n \tensor V_m \arrow[r, "\mu"]
        & V_{n+m-1} \arrow[r] 
        & 0.
\end{tikzcd}
\]
\end{proposition}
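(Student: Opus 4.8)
The plan is to exhibit an explicit $G$-equivariant injection $V_{n-1}\tensor V_{m-1}\hookrightarrow V_n\tensor V_m$ whose image is exactly $\ker\mu$, and then count dimensions. The natural candidate is the map built from multiplication by the ``cross'' element: on the symmetric-power model, define $\kappa\colon V_{n-1}\tensor V_{m-1}\to V_n\tensor V_m$ by $\algclosedfield$-linear extension of
\[
\kappa(f\tensor g)=(Xf)\tensor(Yg)-(Yf)\tensor(Xg).
\]
First I would check that $\kappa$ lands in $V_n\tensor V_m$ (clear, by degrees) and that $\mu\circ\kappa=0$, which is the identity $(Xf)(Yg)-(Yf)(Xg)=0$ after multiplying out — so $\im\kappa\subseteq\ker\mu$.

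Next I would verify $G$-equivariance of $\kappa$. The point is that $X,Y$ span a copy of (the dual of) the natural module, and the antisymmetrised expression $X\tensor Y-Y\tensor X$ is, up to scalar, the invariant element of $\Wedge^2$ of the natural module; since $\SL_2$ preserves this determinant form, $\kappa$ is equivariant. Concretely one substitutes $X\mapsto aX+cY$, $Y\mapsto bX+dY$ and checks the cross-term picks up exactly the factor $ad-bc=1$; this is a short direct computation which I would state but not belabour.

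Then comes injectivity of $\kappa$. For this I would use a basis argument: order the monomial basis $X^iY^{n-2-i}\tensor X^jY^{m-2-j}$ of $V_{n-1}\tensor V_{m-1}$ and look at where $\kappa$ sends it inside $V_n\tensor V_m$; the term $X^{i+1}Y^{n-2-i}\tensor X^jY^{m-1-j}$ appears in $\kappa(X^iY^{n-2-i}\tensor X^jY^{m-2-j})$ and, with a suitable term order (say lexicographic on $(i,j)$ with the $X$-degree of the first tensor factor as leading statistic), the leading terms are distinct across basis elements, so $\kappa$ has trivial kernel. This is the step I expect to be the main obstacle: not because it is deep, but because one must choose the monomial ordering carefully so that no cancellation between the two summands $(Xf)\tensor(Yg)$ and $(Yf)\tensor(Xg)$ collapses the leading term — in characteristic $p$ there are no coefficient issues since the coefficients are all $\pm1$, but the bookkeeping of which monomial is ``leading'' needs care.

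Finally I would conclude by dimension count: $\mu$ is surjective onto the $(n+m-1)$-dimensional $V_{n+m-1}$, so $\dim\ker\mu=nm-(n+m-1)=(n-1)(m-1)=\dim(V_{n-1}\tensor V_{m-1})$. Combined with $\im\kappa\subseteq\ker\mu$ and $\kappa$ injective, this forces $\im\kappa=\ker\mu$ and $\kappa$ an isomorphism onto the kernel, giving the stated short exact sequence. An alternative to the explicit $\kappa$, which I would mention as a remark, is to induct by tensoring with $V_2$ and using the $m=2$ case together with the five lemma, but the direct map above keeps the argument self-contained and in the spirit of the paper.
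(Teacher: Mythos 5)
Your proposal is correct and takes essentially the same approach as the paper: the map $\kappa$ you define is exactly the paper's map $\theta$, and the structure of the argument (equivariance via the determinant form, $\im\kappa\subseteq\ker\mu$, injectivity by a monomial argument, and conclusion by dimension count) is identical. The only cosmetic difference is in the injectivity step, where the paper restricts $\theta$ to the graded pieces spanned by $\{e_{i,j} : i+j=r\}$ and exhibits the resulting bidiagonal matrix of full column rank, which packages the same triangularity that your leading-term ordering extracts.
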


\begin{proof}[Proof\nopunct]
Consider the map \(\theta \colon V_{n-1} \tensor V_{m-1} \to V_n \tensor V_m\) defined by \(\algclosedfield\)-linear extension of \(\theta(f \tensor g) = Xf \tensor Yg - Yf \tensor Xg\).
It is easily verified directly that \(\theta\) is \(\SL_2(\algclosedfield)\)-equivariant, and clearly \(\im \theta \leq \ker \mu\).
Because \(\mu\) is surjective, we have that \(\dim(\ker \mu) = \dim(V_n \tensor V_m) - \dim(V_{n+m-1}) = \dim(V_{n-1} \tensor V_{m-1})\), and so it remains only to show that \(\theta\) is injective.

Let \(e_{i,j} = X^i Y^{n-2-i} \tensor X^j Y^{m-2-j} \in V_{n-1} \tensor V_{m-1}\),
so that \(\setbuild{e_{i,j}}{0 \leq i \leq n-2,\, 0 \leq j \leq m-2}\) is a linear basis for \(V_{n-1} \tensor V_{m-1}\).
For \(0 \leq r \leq n+m-4\), let \(U_r = \langle\, e_{i,j} \mid i+j=r \,\rangle_\algclosedfield \subseteq_\algclosedfield V_{n-1} \tensor V_{m-1}\), and note that as vector spaces \(V_{n-1} \tensor V_{m-1} = \bigoplus_{r=0}^{n+m-4} U_r\).
Similarly,
let \(f_{i,j} = X^i Y^{n-1-i} \tensor X^j Y^{m-1-j} \in V_{n} \tensor V_m\),
so that \(\setbuild{f_{i,j}}{0 \leq i \leq n-1,\, 0 \leq j \leq m-1}\) is a linear basis for \(V_{n} \tensor V_{m}\).
For \(0 \leq r \leq n+m-2\), let \(W_r = \langle\, f_{i,j} \mid i+j=r \,\rangle_\algclosedfield \subseteq_\algclosedfield V_{n} \tensor V_{m}\), and note that as vector spaces \(V_{n} \tensor V_{m} = \bigoplus_{r=0}^{n+m-2} W_r\).

Observe that \(\theta(e_{i,j}) = f_{i+1,j} - f_{i,j+1}\).
Then \(\theta(U_r) \subseteq_\algclosedfield W_{r+1}\), 
and thus it suffices to show that \(\theta|_{U_r}\) is injective for each \(0 \leq r \leq n+m-4\).

Fix \(r\) in this range, and let \(i_0 = \max\set{0,r-(m-2)}\) and \(j_0 = \max\set{0,r-(n-2)}\)
so that \(U_r = \langle\, e_{i,r-i} \mid i_0 \leq i \leq r-j_0 \,\rangle_\algclosedfield\).
Then the \((r - i_0 - j_0 + 1) \times (r - i_0 - j_0)\) matrix representing \(\theta\) with respect to the given bases is
\[
\renewcommand{\arraystretch}{1.2}
\begin{pmatrix}
\phantom{-}1    &               &       &           \\
        -1      &\phantom{-}1   &       &           \\
                & -1            &\raisebox{-2pt}{\ensuremath{\smash{\ddots}}} &           \\
                &               &\raisebox{-2pt}{\ensuremath{\smash{\ddots}}} &\phantom{-}1 \\
                &               &       & -1       \\
\end{pmatrix},
\]
which is of full (column) rank.
Thus \(\theta|_{U_r}\) is injective as required.
\end{proof}


Applying \autoref{prop:mu_ses} inductively, we obtain the following.

\begin{corollary}\label{cor:nm_comp_factors}
Suppose \(1 \leq m \leq n\).
Then \(V_n \tensor V_m\) has a filtration
\[
    0 = M_m \subseteq M_{m-1}
    \subseteq \cdots \subseteq M_{1} \subseteq M_0 = V_n \tensor V_m
\]
where \(M_i \iso V_{n-i} \tensor V_{m-i}\) and \(\faktor{M_i}{M_{i+1}} \iso V_{n+m-1-2i}\). 
\end{corollary}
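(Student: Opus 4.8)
The plan is to induct on $m$, applying \autoref{prop:mu_ses} repeatedly to peel off the top composition factor $V_{n+m-1}$ at each stage. The base case $m = 1$ is immediate: $V_n \tensor V_1 \iso V_n$ since $V_1$ is trivial, and the filtration $0 = M_1 \subseteq M_0 = V_n \tensor V_1$ has the single quotient $V_n = V_{n+1-1-0}$, with $M_0 \iso V_n \tensor V_1$. For the inductive step, assume $2 \leq m \leq n$. \autoref{prop:mu_ses} gives the short exact sequence
\[
\begin{tikzcd}
    0 \arrow[r]
        & V_{n-1} \tensor V_{m-1} \arrow[r, "\iota"]
        & V_n \tensor V_m \arrow[r, "\mu"]
        & V_{n+m-1} \arrow[r]
        & 0,
\end{tikzcd}
\]
where I write $\iota$ for the inclusion of the kernel. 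Since $m - 1 \geq 1$ and $n - 1 \geq m - 1$, the inductive hypothesis applies to $V_{n-1} \tensor V_{m-1}$, yielding a filtration $0 = N_{m-1} \subseteq N_{m-2} \subseteq \cdots \subseteq N_0 = V_{n-1} \tensor V_{m-1}$ with $N_i \iso V_{n-1-i} \tensor V_{m-1-i}$ and $\faktor{N_i}{N_{i+1}} \iso V_{(n-1)+(m-1)-1-2i} = V_{n+m-3-2i}$.

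Now I transport this filtration into $V_n \tensor V_m$ via $\iota$. Define $M_0 = V_n \tensor V_m$ and $M_i = \iota(N_{i-1})$ for $1 \leq i \leq m$. Then $M_m = \iota(N_{m-1}) = \iota(0) = 0$, and for $1 \leq i \leq m-1$ we have $M_i \iso N_{i-1} \iso V_{n-i} \tensor V_{m-i}$ since $\iota$ is injective, as required; also $M_1 = \iota(V_{n-1}\tensor V_{m-1}) = \ker\mu \iso V_{n-1}\tensor V_{m-1}$, consistent with the claimed form at $i=1$. The containments $M_m \subseteq M_{m-1} \subseteq \cdots \subseteq M_1 \subseteq M_0$ follow from the corresponding containments among the $N_j$ together with $M_1 = \ker \mu \subseteq M_0$. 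For the quotients: $\faktor{M_0}{M_1} = \faktor{(V_n \tensor V_m)}{\ker\mu} \iso \im \mu = V_{n+m-1}$ by surjectivity of $\mu$, which is the $i=0$ case $V_{n+m-1-0}$; and for $1 \leq i \leq m-1$, restricting $\iota$ to $N_{i-1}$ and passing to the quotient gives an isomorphism $\faktor{M_i}{M_{i+1}} = \faktor{\iota(N_{i-1})}{\iota(N_i)} \iso \faktor{N_{i-1}}{N_i} \iso V_{n+m-3-2(i-1)} = V_{n+m-1-2i}$, which is exactly the claim.

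The step most deserving of care is checking that the subquotient isomorphism $\faktor{\iota(N_{i-1})}{\iota(N_i)} \iso \faktor{N_{i-1}}{N_i}$ is $G$-equivariant and not merely an isomorphism of vector spaces; this is where the $G$-equivariance of $\iota$ (part of the content of \autoref{prop:mu_ses}) is used, and one must note that $\iota$ being an injective module homomorphism means it restricts to a module isomorphism onto its image on each $N_j$, inducing the claimed isomorphism on quotients. Everything else is bookkeeping. An alternative, essentially equivalent, presentation avoids naming $\iota$: identify $V_{n-1} \tensor V_{m-1}$ with $\ker \mu \subseteq V_n \tensor V_m$ outright via \autoref{prop:mu_ses}, and then the filtration of $V_{n-1}\tensor V_{m-1}$ supplied by the inductive hypothesis \emph{is} already a chain of submodules of $V_n \tensor V_m$; one simply prepends $V_n \tensor V_m$ itself as $M_0$ and reindexes. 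I expect to use whichever phrasing reads more cleanly, but the $\iota$-free version is likely shortest.
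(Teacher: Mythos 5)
Your proof is correct and is precisely what the paper has in mind: the paper gives no explicit proof, saying only ``Applying \autoref{prop:mu_ses} inductively, we obtain the following,'' and your argument spells out exactly that induction on $m$ via the $\mu$-short-exact sequence. The bookkeeping (transporting the filtration of $V_{n-1}\tensor V_{m-1}$ into $V_n \tensor V_m$ along the inclusion of the kernel) is carried out correctly, including the index shifts.
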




We now introduce a novel family of maps, which generalise the map \(\delta\) defined in \cite[(5.2)]{GLOVER1978425} (corresponding to \(n=1\) below).
These maps allow us to see the inclusion of the bottom layer of the above filtration into \(V_n \tensor V_m\), and they split in more cases than \(\mu\) does.

\begin{definition}\label{def:lambda}
For \(n \geq 1\) and \(m \geq 2\),
let \(\lambda \colon V_n \tensor V_m \to V_{n+1} \tensor V_{m-1}\) be the map defined by \(\algclosedfield\)-linear extension of
\[
    \lambda( f \tensor g ) =
        Xf \tensor \frac{\partial g}{\partial X} + Yf \tensor \frac{\partial g}{\partial Y}.
\]
The dependence of \(\lambda\) on \(n\) and \(m\) is suppressed, since it is always clear from context.
\end{definition}

It is routine to verify directly that \(\lambda\) is \(G\)-equivariant.

\begin{lemma}\label{lemma:lambda_surjective}
Suppose \(n \geq m\) and \(2 \leq m \leq p\). Then the map \(\lambda\) is surjective.
\end{lemma}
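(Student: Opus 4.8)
The plan is to exhibit enough elements of $V_{n+1}\tensor V_{m-1}$ in the image of $\lambda$ to span the whole space, working with the monomial bases. As in the proof of \autoref{prop:mu_ses}, I would grade both sides by total $X$-degree: writing $e_{i,j}=X^iY^{n-i}\tensor X^jY^{m-1-j}$ for a basis of $V_n\tensor V_m$ and $f_{i,j}=X^iY^{n-i}\tensor X^jY^{m-1-j}$ (indices in the appropriate ranges) for a basis of $V_{n+1}\tensor V_{m-1}$, one checks from \autoref{def:lambda} that $\lambda$ sends the degree-$r$ graded piece into the degree-$r$ graded piece, since each term of $\lambda(f\tensor g)$ has the same bidegree behaviour: differentiating $g$ by $X$ lowers its $X$-degree by one and multiplying $f$ by $X$ raises the first tensor factor's $X$-degree by one. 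So it suffices to prove surjectivity in each fixed total-degree $r$.

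Next I would compute $\lambda(e_{i,j})$ explicitly. We have $\lambda(X^iY^{n-i}\tensor X^jY^{m-1-j}) = j\,X^{i+1}Y^{n-i}\tensor X^{j-1}Y^{m-1-j} + (m-1-j)\,X^iY^{n-i+1}\tensor X^jY^{m-2-j}$, i.e.\ $\lambda(e_{i,j}) = j\,f_{i+1,j-1} + (m-1-j)\,f_{i,j}$ with the obvious conventions when a coefficient vanishes. Restricted to total degree $r$, if I order the relevant basis vectors $f_{i,j}$ with $i+j=r$ by increasing first index, then $\lambda|_{\text{degree }r}$ is represented by a bidiagonal (lower-triangular-plus-one-superdiagonal, or upon reindexing, upper bidiagonal) matrix whose diagonal entries are the coefficients $(m-1-j)$ and whose off-diagonal entries are the $j$'s. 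The key point is that, because $2\le m\le p$ and we are in characteristic $p$, the integers $1,2,\dots,m-1$ are all nonzero in $k$; hence the sub/super-diagonal entries that matter are nonzero, and a short linear-algebra argument (back-substitution along the bidiagonal, starting from the basis vector of extreme degree where $\lambda$ acts by a single nonzero scalar) shows the matrix has full rank equal to $\dim$ of the target graded piece. The dimension bookkeeping — that the domain graded piece is at least as large as the target graded piece in every degree $r$, which is what makes "full rank" force "surjective" — follows from $n\ge m$ together with $\dim(V_n\tensor V_m)\ge\dim(V_{n+1}\tensor V_{m-1})$, or can be read off directly from the index ranges.

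I expect the main obstacle to be the careful handling of the boundary cases in each graded piece: when $r$ is small or large the index $j$ ranges over a truncated interval, so one must check that the "extreme" basis vector from which the back-substitution starts really does have a nonzero coefficient — this is precisely where the hypothesis $m\le p$ (so none of $1,\dots,m-1$ is divisible by $p$) is used, and it is easy to get an off-by-one error in which degree is the safe starting point. An alternative, perhaps cleaner, route that avoids some of this casework: combine $\lambda$ with the multiplication map $\mu$ and \autoref{cor:nm_comp_factors}, showing $\lambda$ is injective on the bottom filtration layer $M_1\iso V_{n-1}\tensor V_{m-1}$ and surjects onto the corresponding layer on the other side, then induct on $m$; but I would first attempt the direct bidiagonal-matrix computation since it is self-contained and mirrors the proof of \autoref{prop:mu_ses} already in place.
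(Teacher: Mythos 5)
Your approach is correct but genuinely different from the paper's. You grade both sides by total \(X\)-degree, write down the matrix of \(\lambda\) in the monomial basis (getting \(\lambda(e_{i,j}) = j\,f_{i+1,j-1} + (m-1-j)\,f_{i,j}\), which is right modulo your off-by-one slips in the exponents of \(Y\)), observe it is bidiagonal, and argue full row rank in each graded piece; this mirrors the paper's own proof of \autoref{prop:mu_ses}. The paper instead gives a shorter direct argument without matrices: for monomials \(f \in V_{n+1}\), \(g \in V_{m-1}\), it assumes WLOG (by \(X \leftrightarrow Y\) symmetry) that \(X\) has at least as high a power as \(Y\) in \(fg\), notes this forces \(X \mid f\), computes \(\lambda\bigl(\tfrac{1}{X}f \tensor Xg\bigr) = (j+1)\,f \tensor g + \tfrac{Y}{X}f \tensor X\tfrac{\partial g}{\partial Y}\) where \(j\) is the \(X\)-power of \(g\), and inducts downward on \(j\) starting from \(g = X^{m-2}\) (where the second term vanishes), using that \(j+1 \in \{1,\dots,m-1\}\) is invertible. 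Both arguments rest on exactly the same fact that \(m \leq p\) keeps the relevant integers nonzero in \(k\). The trade-off is that the paper's induction avoids the boundary-case bookkeeping you flag as the main obstacle: in your matrix picture, which coefficient (\(j\) or \(m-1-j\)) serves as the pivot for back-substitution depends on whether \(r\) falls below \(m-1\), between \(m-1\) and \(n-1\), or above \(n-1\), and the matrix is square only in the first and third regimes. Your sketch can certainly be completed (and your fallback via \(\mu\) and \autoref{cor:nm_comp_factors} is also viable), but the paper's choice of a single monomial-by-monomial induction sidesteps the casework entirely.
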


\begin{proof}[Proof\nopunct]
Let \(f \in V_{n+1}\), \(g \in V_{m-1}\) be monomials, and we aim to show that \(f \tensor g \in \im \lambda\).
Without loss of generality, suppose \(X\) occurs with a higher power than \(Y\) in the product \(fg\).
Then \(X\) must occur with a positive power in \(f\), and so \(\frac{1}{X}f\) is a monomial in \(V_{n}\).
Let \(j\) be the power of \(X\) in \(g\).
Observe
\[
    \lambda\left( \tfrac{1}{X}f \tensor Xg\right)
        = (j+1) f \tensor g + \frac{Y}{X}f \tensor X \frac{\partial g}{\partial Y}.
\]
By inducting downward on \(j\), we may assume that \(\frac{Y}{X}f \tensor X \frac{\partial g}{\partial Y} \in \im \lambda\) (with base case having \(g = X^{m-2}\) and hence \(\frac{\partial g}{\partial Y} = 0\)).
Since also \(j+1\) is invertible, the result follows.
%



\end{proof}

\begin{proposition}\label{prop:lambda_ses}
Suppose \(n \geq m\) and \(2 \leq m \leq p\).
Then the kernel of \(\lambda\) is isomorphic to \(V_{n-m+1}\), and hence there is a short exact sequence
\[
\begin{tikzcd}
    0 \arrow[r] 
        & V_{n-m+1} \arrow[r]
        & V_n \tensor V_m \arrow[r, "\lambda"]
        & V_{n+1} \tensor V_{m-1} \arrow[r] 
        & 0.
\end{tikzcd}
\]
\end{proposition}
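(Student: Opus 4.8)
The plan is to combine a dimension count with an explicit construction of a submodule of \(V_n\tensor V_m\) isomorphic to \(V_{n-m+1}\) on which \(\lambda\) vanishes. Since \(\lambda\) is surjective by \autoref{lemma:lambda_surjective}, rank--nullity gives
\[
\dim\ker\lambda = \dim(V_n\tensor V_m)-\dim(V_{n+1}\tensor V_{m-1}) = nm-(n+1)(m-1) = n-m+1,
\]
which is positive (because \(n\geq m\)) and equal to \(\dim V_{n-m+1}\). Hence it suffices to produce an injective \(G\)-homomorphism \(V_{n-m+1}\to V_n\tensor V_m\) whose image lies in \(\ker\lambda\): comparing dimensions then forces that image to be all of \(\ker\lambda\), so \(\ker\lambda\iso V_{n-m+1}\) as required.

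For the construction I would iterate the injective \(G\)-equivariant maps \(\theta\) appearing in the proof of \autoref{prop:mu_ses}. Writing \(\theta\colon V_{a-1}\tensor V_{b-1}\to V_a\tensor V_b\), \(\theta(f\tensor g) = Xf\tensor Yg - Yf\tensor Xg\), let \(\Theta\) be the composite
\[
V_{n-m+1}\iso V_{n-m+1}\tensor V_1\xrightarrow{\ \theta\ }V_{n-m+2}\tensor V_2\xrightarrow{\ \theta\ }\cdots\xrightarrow{\ \theta\ }V_n\tensor V_m
\]
of \(m-1\) copies of \(\theta\). The relevant instances of \(\theta\) are all covered by \autoref{prop:mu_ses}, so each is injective and \(G\)-equivariant; hence \(\Theta\) is an injective \(G\)-homomorphism from a module isomorphic to \(V_{n-m+1}\). (One can check that its image is the bottom term \(M_{m-1}\) of the filtration of \autoref{cor:nm_comp_factors}.) It remains only to verify that \(\lambda\circ\Theta = 0\).

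Proving \(\lambda\circ\Theta = 0\) is the crux and the step I expect to be the main obstacle. I would first compute the closed form \(\Theta(h) = \sum_{k=0}^{m-1}(-1)^k\binom{m-1}{k}\,X^{m-1-k}Y^k h\tensor X^kY^{m-1-k}\) by a short induction on the number of applied copies of \(\theta\), and then apply \(\lambda\) term by term via the Leibniz rule. After reindexing the two resulting sums so that they range over the same monomials, their coefficients turn out to be negatives of one another, using the identity \((k+1)\binom{m-1}{k+1} = (m-1-k)\binom{m-1}{k}\); thus everything cancels and \(\lambda(\Theta(h)) = 0\). Alternatively, one can avoid the closed form by establishing the commutation relation \(\lambda\circ\theta = \theta\circ\lambda\) (with the evident index shifts, and the convention \(V_0 = 0\)), proved by a direct computation on \(f\tensor g\) using the Euler identity \(X\,\partial g/\partial X + Y\,\partial g/\partial Y = (\deg g)\,g\), and then sliding the outer \(\lambda\) rightward through all \(m-1\) factors of \(\Theta\) until it is applied on \(V_{n-m+1}\tensor V_1\), where it is the zero map. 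Either way, \(\lambda\circ\Theta = 0\), and the proof is complete.
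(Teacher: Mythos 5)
Your proof is correct and shares the paper's overall skeleton: surjectivity of \(\lambda\) from \autoref{lemma:lambda_surjective} plus rank--nullity gives \(\dim\ker\lambda = n-m+1\), and it remains to exhibit an injective \(G\)-map \(V_{n-m+1}\to\ker\lambda\). Where you genuinely diverge is in the construction and verification of that map. The paper builds it as \(\eta(f)=\mu^{(2)}(f\otimes 1\otimes g_m)\) where \(g_m\) is the image of \((X\otimes Y-Y\otimes X)^{\otimes(m-1)}\) under iterated multiplication; equivariance is seen via \(\det t=1\), injectivity is read off the explicit formula, and \(\im\eta\subseteq\ker\lambda\) is left as ``a routine manipulation of binomial coefficients.'' Your \(\Theta=\theta^{m-1}\) is in fact the \emph{same} map --- expanding the iterate yields \(\Theta(h)=\sum_{k=0}^{m-1}(-1)^k\binom{m-1}{k}X^{m-1-k}Y^{k}h\otimes X^{k}Y^{m-1-k}\), which becomes the paper's formula for \(\eta(h)\) under \(i=m-1-k\) --- but your route buys two things. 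First, \(G\)-equivariance and injectivity come for free by inheriting them from each factor \(\theta\), already established in \autoref{prop:mu_ses}, so you need no separate determinant argument or injectivity check. Second, and more substantively, your commutation identity \(\lambda\circ\theta=\theta\circ\lambda\) (which does hold: in \(\lambda(\theta(f\otimes g))\) the Euler-type term \(g\) arising from differentiating \(Yg\) or \(Xg\) cancels between the two summands of \(\theta\), leaving exactly \(\theta(\lambda(f\otimes g))\)) replaces the binomial verification by the trivial observation that \(\lambda\) kills \(V_{n-m+1}\otimes V_1\), since the second factor is constant. Your identification of \(\im\Theta\) with the bottom layer \(M_{m-1}\) of the filtration in \autoref{cor:nm_comp_factors} also supplies a conceptual reason to expect this image to lie in \(\ker\lambda\), which the paper alludes to but does not exploit in its proof.
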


\begin{proof}[Proof\nopunct]
Define \(G\)-equivariant variations on the multiplication map by
\begin{alignat*}{3}
\mu^{(r)} \colon
    V_{n_1} \tensor V_{m_1} \tensor &\cdots \tensor V_{n_r} \tensor V_{m_r} \ &\to&&\ V_{N-(r-1)} &\tensor V_{M-(r-1)} \\
    f_1 \tensor g_1 \tensor &\cdots \tensor f_r \tensor g_r &\mapsto&& f_1 \cdots f_r &\tensor g_1 \cdots g_r
\end{alignat*}
extended \(k\)-linearly, where \(N = \sum_{i=1}^r n_i\) and \(M = \sum_{i=1}^r m_i\).
Let \(g_m \in V_m \tensor V_m\) be the element
\begin{align*}
g_m &= \mu^{(m-1)} \left( (X \tensor Y - Y \tensor X) \tensor \cdots \tensor (X \tensor Y - Y \tensor X) \right) \\
    &= \sum_{i=0}^{m-1} (-1)^{m-1-i} \binom{m-1}{i} X^i Y^{m-1-i} \tensor X^{m-1-i} Y^i.
\end{align*}
By the first expression, it is clear that \(t g_m = (\det t)^{m-1} g_m = g_m\) for any \(t \in G\).

Now define a \(k\)-linear map \(\eta \colon V_{n-m+1} \to V_n \tensor V_m\) by
\begin{align*}
    \eta(f)
        &= \mu^{(2)}(f \tensor 1 \tensor g_m) \\
        &= \sum_{i=0}^{m-1} (-1)^{m-1-i} \binom{m-1}{i} fX^i Y^{m-1-i} \tensor X^{m-1-i} Y^i.
\end{align*}
We see from the first expression that for any \(t \in G\), we have \(t\eta(f) = (\det t)^{m-1} \eta(tf) = \eta(tf)\). That is, \(\eta\) is \(G\)-equivariant.
Clearly the second expression above is zero if and only if \(f = 0\), so \(\eta\) is injective and \(V_{n-m+1} \iso \im \eta\).

It now suffices to show \(\im \eta \iso \ker \lambda\).
Using the second expression for \(\eta(f)\) above, it is a routine manipulation of binomial coefficients to verify that \(\im \eta \leq \ker \lambda\).
Counting dimensions then yields the result.
%
%
%
\end{proof}

\begin{theorem}[Clebsch--Gordan rule for \(\SL_2(\primesubfield)\) in characteristic \(p\)]\label{thm:CG}
Suppose \(1 \leq m \leq n \leq p\).
Recall we write \(\nmstring{n}{m} = \set{n+m-1, n+m-3, \ldots, n-m+3, n-m+1}\).
Then
\[
V_n \tensor V_m 
\iso
    \longsubsum{\bigoplus}{\substack{i \in \nmstr \cap [p]\\ 2p-i \notin \nmstr }}{V_i}
        \oplus
    \longsubsum{\bigoplus}{\substack{i \in \nmstr \cap [p]\\ 2p-i \in \nmstr}}{P_i}
        \oplus
    \one{n = m = p}V_p.
\]
\end{theorem}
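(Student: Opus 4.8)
The plan is to run an induction on $m$, using the two short exact sequences established above as the engine. The base case $m = 1$ is trivial: $V_n \tensor V_1 \iso V_n$, and $\nmstring{n}{1} = \set{n}$, so the right-hand side reads $V_n$ exactly (with $P_p$ appearing only in the degenerate case $n = m = p$, which here would need $m = p = 1$, impossible for $p$ prime). For the inductive step, suppose $2 \leq m \leq n \leq p$ and that the rule holds for all smaller values of the second parameter. I would first treat the ``generic'' range and then handle the boundary case $n = m = p$ separately at the end, since that is where the extra summand $V_p$ appears.

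The heart of the argument is to combine \autoref{prop:lambda_ses}, which gives
\[
0 \to V_{n-m+1} \to V_n \tensor V_m \xrightarrow{\lambda} V_{n+1} \tensor V_{m-1} \to 0,
\]
with the inductive description of $V_{n+1} \tensor V_{m-1}$ (valid since $m - 1 < m$ and $n + 1 \geq m - 1$; when $n + 1 > p$ one must first reduce using that $V_{n+1}$ is then no longer simple, but in fact the relevant regime keeps $n+1 \le p$ or the tensor factor is projective — this bookkeeping is part of the work). The key structural observations are: (i) the summand $V_{n-m+1}$ on the left is the unique ``extreme low'' term of the string $\nmstring{n}{m}$, namely the index $n - m + 1$; (ii) the string identity $\nmstring{n+1}{m-1} = \nmstring{n}{m} \setminus \set{n-m+1, n+m-1} \cup \set{\text{nothing new}}$ — more precisely $\nmstring{n}{m}$ is obtained from $\nmstring{n+1}{m-1}$ by adjoining the two outer values $n+m-1$ and $n-m+1$; and (iii) the multiplication map $\mu \colon V_n \tensor V_m \to V_{n+m-1}$ of \autoref{prop:mu_ses} witnesses the ``extreme high'' summand with index $n + m - 1$. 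So the recursion peels off both ends of the string at once: one end via $\lambda$, the other via $\mu$. I would show the sequence splits off $V_{n-m+1}$ (resp.\ $V_{n+m-1}$) precisely when the non-projectivity condition $2p - i \notin \nmstring{n}{m}$ holds for that $i$, and that when $2p - i \in \nmstring{n}{m}$ the corresponding pair of $V_i$'s fuses into a single $P_i$ (using the Loewy structure of the $P_i$ recalled in \autoref{section:background}, together with the filtration from \autoref{cor:nm_comp_factors} to pin down composition factors). The splitting claims are where \autoref{lemma:lambda_surjective} and the explicit maps $\eta$, $\theta$ do their work: one exhibits an explicit section, or shows $\Hom(V_{n+m-1}, V_n \tensor V_m) \neq 0$ by composing with known maps, whenever the relevant index is ``$p$-small'' in the sense that it does not pair up across $2p$.

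The main obstacle I anticipate is the combinatorial bookkeeping of \emph{which} indices in $\nmstring{n}{m} \cap [p]$ pair up (i.e.\ satisfy $2p - i \in \nmstring{n}{m}$) and showing this is stable under the peeling recursion — equivalently, that the pairing structure of $\nmstring{n}{m}$ is compatible with that of $\nmstring{n+1}{m-1}$ after removing the two outer elements. Concretely: if $n + m - 1 > p$ then $n+m-1 \notin [p]$ and instead its ``reflection'' $2p - (n+m-1) = 2p - n - m + 1$ lies in the string and contributes a $P$; one must check this reflected index is exactly the one the inductive hypothesis hands back, up to the $\lambda$-sequence contribution. Carefully matching the three regimes — $n + m - 1 \leq p$; $n + m - 1 > p$ but $n \leq p$; and the corner $n = m = p$ — against the three summands in the statement is the delicate part. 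Once the index arithmetic is organized (it is cleanest to phrase everything in terms of the multiset of ``folded'' indices $\min(i, 2p - i)$), the module-theoretic input reduces to: a non-split extension of two copies of a simple $V_i$ with $i \notin \set{1, p}$ cannot occur among direct summands here because the only indecomposable with those two composition factors stacked is $P_i$, which forces the full $2p$-dimensional structure — and the dimension count from \autoref{cor:nm_comp_factors} then determines the decomposition uniquely. The degenerate summand $\one{n=m=p}V_p$ falls out because $V_p = P_p$ is projective and simple, so it is its own pair, contributing a $V_p$ rather than a $P$-fusion.
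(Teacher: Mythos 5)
Your proposal identifies the same two engines as the paper (the short exact sequences from \autoref{prop:mu_ses} and \autoref{prop:lambda_ses}) and the same base case, so the overall shape is right, but there are three genuine gaps.

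First, a combinatorial error that misshapes the whole recursion: you say $\nmstring{n}{m}$ is obtained from $\nmstring{n+1}{m-1}$ by adjoining \emph{both} outer values $n+m-1$ and $n-m+1$, and you propose to ``peel off both ends of the string at once: one end via $\lambda$, the other via $\mu$.'' In fact $\nmstring{n+1}{m-1} = \set{n+m-1, n+m-3, \ldots, n-m+3}$, so passing to $\nmstring{n}{m}$ adjoins \emph{only} the low endpoint $n-m+1$; the high endpoint $n+m-1$ is already present. Consequently the $\lambda$-step alone accounts for the entire difference between the two proposed decompositions, and there is no ``other end'' for $\mu$ to peel in the generic step. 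The paper uses $\mu$ only at the boundary $n=p$, as a separate implication, and this is not optional bookkeeping: the $\lambda$ sequence points to $V_{n+1}\otimes V_{m-1}$, which leaves the range of the theorem exactly when $n=p$, so without a distinct argument there (using that $V_p\otimes V_m$ is projective and examining its socle via the kernel of $\mu$) your induction cannot close.

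Second, the heart of the matter --- why the $\lambda$ sequence $0 \to V_{n-m+1} \to V_n\otimes V_m \to V_{n+1}\otimes V_{m-1} \to 0$ splits --- is not argued. Your sketch (``exhibit an explicit section, or show $\Hom(V_{n+m-1}, V_n\otimes V_m)\neq 0$'') conflates the two sequences: $V_{n+m-1}$ is the $\mu$-quotient, not the $\lambda$-submodule, so that $\Hom$ computation is aimed at the wrong extension; and no explicit section is produced. The paper's actual argument is qualitatively different and is the step that makes the proof work: after projecting off the projective part $Q$ of $V_{n+1}\otimes V_{m-1}$ (which splits automatically since $Q$ is projective), one assumes the residual sequence does not split and derives a contradiction from self-duality of $V_n\otimes V_m$ --- a non-split extension $T$ would be a non-self-dual indecomposable summand whose dual cannot appear (because $V_{n-m+1}$ has multiplicity one as a composition factor of $W$ and does not head any projective summand). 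Without a splitting argument, the induction cannot advance.

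Third, your account of how the $P_i$ arise (``the corresponding pair of $V_i$'s fuses into a single $P_i$, using the Loewy structure'') is not how it happens in this inductive scheme: the projective summands carried by $V_{n+1}\otimes V_{m-1}$ are imported wholesale by the inductive hypothesis and split off under $\lambda$-projection, while genuinely new projective summands are created only in the $n=p$ step, where the whole tensor product is projective and \autoref{prop:mu_ses} exhibits its socle; the corner $n=m=p$ is then settled by a Schur's-lemma count of trivial submodules of $\Hom_k(V_p,V_p)$, not by a general ``projective and simple so it is its own pair'' principle. In short: right tools and right base case, but the string combinatorics, the splitting mechanism, and the role of the $\mu$-sequence all need to be repaired.
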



\begin{proof}[Proof\nopunct]
Our strategy is to establish two implications.
Implication (i) is that if the theorem holds for \((n+1,m-1)\), then it holds for \((n,m)\) (where \(2 \leq m \leq n \leq p-1\)), which we prove by showing that the short exact sequence involving \(\lambda\) splits in this case.
Implication (ii) is that if the theorem holds for \((p-1,m-1)\), then it holds for \((p,m)\) (where \(2 \leq m \leq p\)), which we prove using the short exact sequence involving \(\mu\).
With these implications, it suffices to show the theorem holds for \((n,1)\) for \(1 \leq n \leq p\) (as illustrated in the case \(p=7\) in \autoref{fig:logic_diagram}).
But these cases are trivial, since \(V_n \tensor V_1 \iso V_n\) (and \(P_p \iso V_p\)), so the theorem follows.

\begin{figure}[ht]\centering
\newcommand{\dotat}[2]{\draw[fill=black] (#1,#2) circle (2pt);}
\newcommand{\emptydotat}[2]{\draw[fill=white] (#1,#2) circle (3pt);}
\newcommand{\dr}[2]{\node at (#1+0.5,#2+0.5){\rotatebox{-45}{\scalebox{1.6}{\(\implies\)}}};}
\newcommand{\ur}[2]{\node at (#1+0.5,#2+0.5) {\rotatebox{45}{\scalebox{1.6}{\(\implies\)}}};}
\renewcommand{\t}{3pt}  
\renewcommand{\l}{12pt} 
\renewcommand{\o}{9pt}  
\resizebox{!}{60mm}{
\begin{tikzpicture}[x=1cm, y=1cm]
\draw (1,\o)--(7,\o);
\draw (\o,1)--(\o,7);

\foreach \i in {1,2,3,4,5,6,7} {\draw(\i,-\t+\o)--(\i,\t+\o); \node at (\i, -\l+\o) {\i};}
\foreach \j in {1,2,3,4,5,6,7} {\draw(-\t+\o,\j)--(\t+\o,\j); \node at (-\l+\o, \j) {\j};}

\node at (4,\o-28pt) {\(m\)};
\node at (\o-28pt,4) {\(n\)};

\foreach \j in {1,2,3,4,5,6,7} {\emptydotat{1}{\j}}
\foreach \j in {2,3,4,5} {\dr{1}{\j}}

\foreach \j in {2,3,4,5,6,7} {\dotat{2}{\j}};
\foreach \j in {3,4,5} {\dr{2}{\j}};

\foreach \j in {3,4,5,6,7} {\dotat{3}{\j}};
\foreach \j in {4,5} {\dr{3}{\j}};

\foreach \j in {4,5,6,7} {\dotat{4}{\j}};
\foreach \j in {5} {\dr{4}{\j}};

\foreach \j in {5,6,7} {\dotat{5}{\j}};
\foreach \j in {6,7} {\dotat{6}{\j}};
\foreach \j in {7} {\dotat{7}{\j}};

\foreach \i in {1,2,3,4,5} {\ur{\i}{6}; \dr{\i}{6}} \ur{5}{6} \ur{6}{6}

\end{tikzpicture}
}
\caption{An illustration, in the case \(p=7\), of how the implications we prove suffice to prove the entire theorem.
The dot in position \((n,m)\) represents the theorem holding for that pair of values, the hollow dots being the trivial cases with \(m=1\).
The arrows represent the implications we prove:
the downward-and-right arrows correspond to implication (i); the upward-and-right arrows to implication (ii).}
\label{fig:logic_diagram}
\end{figure}

\proofsubheading{Implication (i)}

Suppose the theorem holds for \((n+1,m-1)\) (where \(2 \leq m \leq n \leq p-1\)); that is,
\[
    V_{n+1} \tensor V_{m-1}
    \iso
    \longsubsum{\bigoplus}{\substack{i \in \nmstring{n+1}{m-1} \cap [p]\\ 2p-i \notin \nmstring{n+1}{m-1} }}{V_i}
        \oplus
    \longsubsum{\bigoplus}{\substack{i \in \nmstring{n+1}{m-1} \cap [p]\\ 2p-i \in \nmstring{n+1}{m-1}}}{P_i.}
\]
Observe that the proposed decomposition of \(V_{n} \tensor V_{m}\) differs from that of \(V_{n+1} \tensor V_{m-1}\) only by an additional summand of \(V_{n-m+1}\).
Thus to show the theorem holds for \((n,m)\), it suffices to show that the short exact sequence
\[
\begin{tikzcd}
    0 \arrow[r] 
        & V_{n-m+1} \arrow[r]
        & V_{n} \tensor V_{m} \arrow[r, "\lambda"]
        &
    V_{n+1} \tensor V_{m-1} \arrow[r]
        & 0
\end{tikzcd}
\]
from
\autoref{prop:lambda_ses} splits.

Let \(Q \iso \bigoplus_{{\substack{i \in \nmstring{n+1}{m-1} \cap [p]\\ 2p-i \in \nmstring{n+1}{m-1} }}} P_i\) be the projective part of \(V_{n+1} \tensor V_{m-1}\).
Then the projection of \(\lambda\) onto \(Q\) splits, and so there is a module \(W\) such that
\[
    V_{n} \tensor V_{m}
    \iso
    W
        \oplus
    Q
\]
and such that there is a short exact sequence
\[
\begin{tikzcd}
    0 \arrow[r] 
        & V_{n-m+1} \arrow[r]
        & W \arrow[r]
        &
    \bigoplus_{{\substack{i \in \nmstring{n+1}{m-1} \cap [p]\\ 2p-i \notin \nmstring{n+1}{m-1} }}} V_i \arrow[r] 
        & 0.
\end{tikzcd}
\]

It now suffices to show that this sequence splits.
Suppose, towards a contradiction, the sequence does not split.
Then \(W\), and hence \(V_{n} \tensor V_{m}\), has as an indecomposable summand some non-split extension \(T\) of \(V_{n-m+1}\) by a module with composition factors a nonempty subset of \(\setbuild{V_i}{i \in \nmstring{n+1}{m-1}\cap[p]}\).
This set of composition factors does not contain \(V_{n-m+1}\) itself, so \(T\) is not self-dual.
Furthermore, the dual of \(T\) is not a summand of \(W\), since \(V_{n-m+1}\) occurs only once as a composition factor of \(W\), and nor is it a summand of \(Q\), since \(V_{n-m+1}\) does not occur as the head of any of the projective summands of \(Q\).
Thus the dual of \(T\) is not a summand of \(V_{n} \tensor V_{m}\),
contradicting the self-duality of \(V_{n} \tensor V_{m}\).
So the sequence splits as required.

\proofsubheading{Implication (ii)}

Suppose the theorem holds for \((p-1,m-1)\) (where \(2 \leq m \leq p\)).
Then, using that
\(\nmstring{p-1}{m-1} \cap [p] = \paren{\nmstring{p}{m} \setminus \set{p+m-1}} \cap [p] = \nmstring{p}{m}\cap [p]\), we have
\[
    V_{p-1} \tensor V_{m-1}
        \iso V_{p-m+1} \oplus \longsubsum{\bigoplus}{\substack{i \in \nmstring{p}{m}\cap [p] \\ i \neq p-m+1}}{P_i.}
\]
Then by \autoref{prop:mu_ses} we have a short exact sequence
\[
\begin{tikzcd}
    0 \arrow[r] 
        & V_{p-m+1} \oplus\; \bigoplus_{\substack{i \in \nmstring{p}{m}\cap [p] \\ i \neq p-m+1}}\; P_i \arrow[r]
        & V_p \tensor V_m \arrow[r, "\mu"]
        & V_{p+m-1} \arrow[r] 
        & 0.
\end{tikzcd}
\]
Thus \(\bigoplus_{i \in \nmstring{p}{m}\cap[p]} V_i\) is isomorphic to a submodule of \(\soc(V_p \tensor V_m)\).
But since \(V_p\) is projective, so is \(V_p \tensor V_m\) (because the tensor product of a projective module with any other module is projective, as shown in \cite[Lemma 4, p.~47]{alperin_1986}).
Thus \(\bigoplus_{i \in \nmstring{p}{m}\cap[p]} P_i\) is isomorphic to a submodule of \(V_p \tensor V_m\).

The proof is completed by counting dimensions to show that this submodule is the entire tensor product, unless \(m = p\) when we must identify one additional summand.
The dimension counting is straightforward, recalling from \autoref{section:background} that the projective indecomposable \(\algclosedfield\SL_2(\primesubfield)\)-modules are \(2p\)-dimensional, except for \(P_1\) and \(P_p \iso V_p\) which are \(p\)-dimensional.
When \(m = p\), we find that exacly \(p\) dimensions are missing; since \(V_p \tensor V_p\) is projective, these \(p\) dimensions must be accounted for by an additional copy of either \(P_1\) or \(V_p\).
Since \(V_p\) is self-dual, we have \(V_p \tensor V_p \iso \Hom_\algclosedfield(V_p, V_p)\).
Noting that the direct sum of all trivial submodules of \(\Hom_\algclosedfield(V_p, V_p)\) is \(\Hom_{\algclosedfield G}(V_p, V_p)\), which is isomorphic to \(V_1\) by Schur's Lemma, we deduce that \(V_1\) occurs in the socle of \(V_p \tensor V_p\) with multiplicity \(1\).
Thus the missing summand is \(V_p\).
\end{proof}

\begin{example}\label{eg:tensor_of_simples}
Let \(p = 17\) and consider \(V_{14} \tensor V_9\).
We draw the \((14,9)\)-string below, and indicate those elements \(i\) for which \(2p-i \in \nmstring{14}{9}\) by joining \(i\) and \(2p-i\) with a dotted line.
The unpaired elements give rise to simple summands, while the paired elements give rise to projective indecomposable summands.
The summand of \(V_{14} \tensor V_9\) that arises out of each element of \(\nmstring{14}{9}\cap[17]\) is written below it.
\tikzset{
    dottedline/.style={thick, dotted},
    tick/.style={path picture={ 
        \draw[black]
        (path picture bounding box.south) -- (path picture bounding box.north);
    }},
    bigtick/.style={tick, circle, minimum size=0.5cm, fill=none, inner sep=0},
    smalltick/.style={tick, circle, minimum size=0.25cm, fill=none, inner sep=0},
    cross/.style={path picture={ 
        \draw[black]
        (path picture bounding box.south east) -- (path picture bounding box.north west) (path picture bounding box.south west) -- (path picture bounding box.north east);
    }},
    crossed/.style={cross, circle, minimum size=0.4cm, fill=none, inner sep=0},
}
\begin{center}
\begin{tikzpicture}[
    node distance=0.6cm,
]
    \node (6) [bigtick, label=below:\(6\)] {};
    \node (7) [smalltick, right of=6] {};
    \node (8) [bigtick, right of=7, label=below:\(8\)] {};
    \node (9) [smalltick, right of=8] {};
    \node (10) [bigtick, right of=9, label=below:\(10\)] {};
    \node (11) [smalltick, right of=10] {};
    \node (12) [bigtick, right of=11, label=below:\(12\)] {};
    \node (13) [smalltick, right of=12] {};
    \node (14) [bigtick, right of=13, label=below:\(14\)] {};
    \node (15) [smalltick, right of=14] {};
    \node (16) [bigtick, right of=15, label=below:\(16\)] {};
    \node (17) [crossed, right of=16, label=below:\(17\)] {};
    \node (18) [bigtick, right of=17, label=below:\(18\)] {};
    \node (19) [smalltick, right of=18] {};
    \node (20) [bigtick, right of=19, label=below:\(20\)] {};
    \node (21) [smalltick, right of=20] {};
    \node (22) [bigtick, right of=21, label=below:\(22\)] {};
    \draw (6.center) -- (22.center);
    
    \draw [bend left=90, dottedline] (12) to (22);
    \draw [bend left=90, dottedline] (14) to (20);
    \draw [bend left=90, dottedline] (16) to (18);
\begin{scope}[
    node distance=1.1cm,
]
    \node (V6) [below of=6] {\(V_6\)};
    \node (V8) [below of=8] {\(V_8\)};
    \node (V10) [below of=10] {\(V_{10}\)};
    \node (P12) [below of=12] {\(P_{12}\)};
    \node (P14) [below of=14] {\(P_{14}\)};
    \node (P16) [below of=16] {\(P_{16}\)};
    
    \node (o7) [below of=7] {\(\oplus\)};
    \node (o9) [below of=9] {\(\oplus\)};
    \node (o11) [below of=11] {\(\oplus\)};
    \node (o13) [below of=13] {\(\oplus\)};
    \node (o15) [below of=15] {\(\oplus\)};
\end{scope}

    \node (iso) [left of=V6, label=left:\(V_{14} \otimes V_9\)] {\(\cong\)};
\end{tikzpicture}
\end{center}

The pairing-up of \(i\) and \(2p-i\) corresponds to an isomorphism
\[
V_{2p-i} \iso
    \faktor{P_i}{V_i} \oplus \one{i=1}V_p.
\]
Indeed this isomorphism can be established by considering the short exact sequence
\[
\begin{tikzcd}
    0 \arrow[r] 
        & V_{p-1} \tensor V_{p-i} \arrow[r]
        & V_p \tensor V_{p-i+1} \arrow[r, "\mu"]
        & V_{2p-i} \arrow[r] 
        & 0.
\end{tikzcd}
\]
from \autoref{prop:mu_ses} and applying \autoref{thm:CG} to decompose \(V_p \tensor V_{p-i+1}\) and \(V_{p-1} \tensor V_{p-i}\).
\end{example}

\section{Tables of multiplicities}
\label{section:matrix}

We now suppose \(p > 2\).

We examine the table of multiplicities of simple modules as indecomposable summands of tensor products of simple modules, as well as the graph which has this table as its adjacency matrix.
This table has symmetries that reveal properties of the tensor products of representations of \(G\).
Furthermore, the Markov chain defined in the following section is shown to be a walk on this graph, so our observations here aid our understanding of that Markov chain.
We use \(\multiplicity{\ }{\ }\) to denote multiplicity as an indecomposable summand.

\begin{definition}
For \(n \in [p-1]\), let \(A^{(n)}\) be the matrix with entries \(A^{(n)}_{i,j} = \multiplicity{V_j}{V_i \tensor V_n}\).
Let \(\G^{(n)}\) be the (directed) graph (with loops) whose adjacency matrix is \(A^{(n)}\).
The parameter \(n\) is suppressed unless there is need to emphasise it.
\end{definition}

The matrix \(A\) is depicted in \autoref{fig:adj_matrix}.
It is visually apparent that \(A\) is symmetric; this motivates our next result.

\begin{figure}[ht]
\setlength{\padlen}{\widthof{\(\ddots\)}}
\renewcommand\arraystretch{1.1}
\[
\begin{blockarray}{*{16}{c}}
\begin{block}{*{16}{>{\scriptstyle}c}}
&  1     &  2   & \cdots &       &       &   n   &       &       &       &       &       &       &       &      &\\
\end{block}
\begin{block}{>{\scriptstyle}c(*{14}{>{\mathllap}c})>{\scriptstyle}c}
1&       & \pad  &       & \pad  &       &   1   &       &       &       &       &       &       &       &       & \\
2&       &       &       &       &   1   &       &   1   &       &       &       &       &       &       &       &\\
\vdots&  &       &       &   1   &       &   1   &       &   1   &       &       &       &       &       &       &\\
&        &       &\iddots&       &\iddots&       &\ddots &       &\ddots &       &       &       &       &       &\\
&        &   1   &       &   1   &       &   1   &       &   1   &       &   1   &       &       &       &       &\\
n&   1   &       &   1   &       &   1   &       &   1   &       &   1   &       &   1   &       &       &       &\\
&        &   1   &       &   1   &       &   1   &       &   1   &       &   1   &       &   1   &       &       &\\
&        &       &\ddots &       &\ddots &       &\ddots &       &\ddots &       &\ddots &       &\ddots &       &\\
&        &       &       &   1   &       &   1   &       &   1   &       &   1   &       &   1   &       &   1   & p-n\\
&        &       &       &       &   1   &       &   1   &       &   1   &       &   1   &       &   1   &       &\\
&        &       &       &       &       &\ddots &       &\ddots &       &\iddots&       &\iddots&       &       &\\
&        &       &       &       &       &       &   1   &       &   1   &       &   1   &       &       &       &\\
&        &       &       &       &       &       &       &   1   &       &   1   &       &       &       &       &\\
&        &       &       &       &       &       &       &       &   1   &       &       &       &       &       & p-1\\
\end{block}
\begin{block}{*{16}{>{\scriptstyle}c}}
&        &       &       &       &       &       &       &       &  \mathclap{\scriptstyle p-n}  &       &       &       &       & \mathclap{\scriptstyle p-1} & \\
\end{block}
\end{blockarray}
\]
\caption{The matrix \(A\) (here with \(n < p -n\))}
\label{fig:adj_matrix}
\end{figure}

\begin{lemma}\label{lemma:symmetry_of_division}
Suppose \(1 \leq i, j, l \leq p-1\).
The following are equivalent:
\begin{enumerate}[(i)]
    \item\label{VldividesViVj}
\simpledivide{l}{i}{j};
    \item\label{VidividesVjVl}
\simpledivide{i}{j}{l};
    \item\label{VjdividesVlVi}
\simpledivide{j}{l}{i};
    \item\label{symmetriccondition}
\(i+j+l \equiv 1 \pmod*{2}\), \(i+j+l < 2p\), and \(l < i + j\), \(i < j+l\) and \(j < l+i\).
\end{enumerate}
In particular, \(A\) is a symmetric matrix.
\end{lemma}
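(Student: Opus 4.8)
The plan is to read all four conditions directly off the Clebsch--Gordan rule of \autoref{thm:CG}, so that the whole statement becomes elementary arithmetic with the \((n,m)\)-strings. The core of the argument is the single equivalence (i) \(\iff\) (iv); once that is in place, the manifest symmetry of condition (iv) in \(i,j,l\) immediately delivers (ii) and (iii), and also the symmetry of \(A\).

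For the equivalence (i) \(\iff\) (iv), I would first use \(V_i \tensor V_j \iso V_j \tensor V_i\) to assume without loss of generality that \(i \geq j\), so that \autoref{thm:CG} describes \(V_i \tensor V_j\) via the string \(\nmstring{i}{j}\). Since \(1 \leq l \leq p-1\), the module \(V_l\) is not isomorphic to any \(P_k\) (whose dimension is \(p\) or \(2p\)) nor to the exceptional summand \(V_p\) (which occurs only when \(i=j=p\)); hence \(V_l\) is an indecomposable summand of \(V_i \tensor V_j\) precisely when \(l \in \nmstring{i}{j} \cap [p]\) and \(2p-l \notin \nmstring{i}{j}\), and then with multiplicity \(1\). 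The next step is to unpack these two conditions. Writing \(\nmstring{i}{j} = \set{i+j-1, i+j-3, \ldots, i-j+1}\), membership \(l \in \nmstring{i}{j}\) amounts to \(i+j+l\) being odd together with \(i-j+1 \leq l \leq i+j-1\), i.e.\ \(i<j+l\) and \(l<i+j\), while \(l \leq p\) is automatic. For the other condition one checks that \(2p-l\) has the same parity as \(l\), that \(i-j+1 \leq 2p-l\) holds automatically (it reads \(l+i-j \leq 2p-1\), and \(l+i-j \leq 2(p-1)-1\)), and that \(2p-l \leq i+j-1\) rearranges to \(i+j+l>2p\); so, the parity already being right, \(2p-l \notin \nmstring{i}{j}\) is equivalent to \(i+j+l \leq 2p\), which since \(i+j+l\) is odd is the same as \(i+j+l<2p\). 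Finally \(j<l+i\) holds automatically since \(j \leq i < i+l\). Assembling, (i) is equivalent to the conjunction of \(i+j+l \equiv 1 \pmod{2}\), \(i+j+l<2p\), \(l<i+j\), \(i<j+l\) and \(j<l+i\), which is (iv). (If instead \(j>i\), the identical argument run with \(V_j \tensor V_i\) produces the same list of conditions.)

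Condition (iv) is invariant under every permutation of \(i\), \(j\), \(l\), so applying the equivalence (i) \(\iff\) (iv) to the relabelled triples \((j,l,i)\) and \((l,i,j)\) gives (ii) \(\iff\) (iv) and (iii) \(\iff\) (iv); hence (i)--(iv) are all equivalent. For the final assertion: the multiplicity-one observation shows \(A_{i,j} = \multiplicity{V_i \tensor V_n}{V_j} \in \set{0,1}\) for all \(i,j \in [p-1]\), and \(A_{i,j}=1\) iff \(V_j\) is a summand of \(V_i \tensor V_n\), iff (iv) holds for \(\set{i,n,j}\); the same description applies to \(A_{j,i}\) with \(\set{j,n,i}\), and (iv) does not see the order of its arguments, so \(A_{i,j} = A_{j,i}\).

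The step I expect to require the most care is the arithmetic in the second paragraph: translating the two string-membership conditions into inequalities, and sorting out which of the inequalities in (iv) are genuine constraints versus automatic consequences of \(i \geq j\), \(l \geq 1\) and \(i,j,l \leq p-1\). Nothing here is conceptually hard, but it is the place where an off-by-one slip would be easiest to commit.
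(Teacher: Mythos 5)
Your proposal is correct and takes essentially the same route as the paper: establish (i) $\iff$ (iv) by reading off \autoref{thm:CG}, then invoke the symmetry of (iv) in $i,j,l$ to get the remaining equivalences and the symmetry of $A$. The paper simply states that the string-membership conditions ``easily rearrange'' to (iv), whereas you carry out that rearrangement in full (checking parity, which inequalities are genuine and which are automatic, and ruling out the projective and exceptional summands); your expanded arithmetic is accurate.
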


\begin{proof}[Proof\nopunct]
Observe that \ref{symmetriccondition} is symmetric in \(i\), \(j\) and \(l\), and so it suffices to show that \ref{VldividesViVj} and \ref{symmetriccondition} are equivalent.
Indeed, \autoref{thm:CG} tells us that \ref{VldividesViVj} holds if and only if \(l \equiv i+j-1 \pmod{2}\) and \(\max\set{i-j,j-i} < l < \min\set{i+j, 2p-(i+j)}\), which easily rearranges to \ref{symmetriccondition}.
\end{proof}

Thus \(\G\) can be viewed as an \emph{undirected} graph (with loops); we do so from now on.
Some small examples of \(\G\) are depicted in \autoref{fig:graphs}.

\begin{figure}[ht]
\tikzset{
    vertex/.style={circle,draw, minimum size=4.5mm, inner sep=0},
}
    \begin{subfigure}{0.49\linewidth}
        \centering
\begin{tikzpicture}[
    auto,
    node distance=1.1cm,
]
\begin{scope}[
    every node/.style={vertex},
]
    \node (1) [label=center:\(1\)] {};
    \node (2) [below of=1, label=center:\(2\)] {};
    \node (3) [right of=1, label=center:\(3\)] {};
    \node (4) [below of=3, label=center:\(4\)] {};
    \node (5) [right of=3, label=center:\(5\)] {};
    \node (6) [below of=5, label=center:\(6\)] {};
    \draw (1) -- (2) -- (3) -- (4) -- (5) -- (6);
\end{scope}
    \path[use as bounding box] (-0.9, 1) rectangle (3.1, -2);
\end{tikzpicture}
        \caption{\(n=2\)}
    \end{subfigure}
    \begin{subfigure}{0.49\linewidth}
        \centering
\begin{tikzpicture}[
    auto,
    node distance=1.1cm,
]
\begin{scope}[
    every node/.style={vertex},
]
    \node (1) [label=center:\(1\)] {};
    \node (2) [below of=1, label=center:\(2\)] {};
    \node (3) [right of=1, label=center:\(3\)] {};
    \node (4) [below of=3, label=center:\(4\)] {};
    \node (5) [right of=3, label=center:\(5\)] {};
    \node (6) [below of=5, label=center:\(6\)] {};
    \draw (1) -- (3) -- (5);
    \draw (3) to [out=55,in=125,looseness=9] (3);
    \draw (5) to [out=55,in=125,looseness=9] (5);
    \draw (2) -- (4) -- (6);
    \draw (2) to [out=-55,in=-125,looseness=9] (2);
    \draw (4) to [out=-55,in=-125,looseness=9] (4);
\end{scope}
    \path[use as bounding box] (-0.9, 1) rectangle (3.1, -2);
\end{tikzpicture}
        \caption{\(n=3\)}
    \end{subfigure}
    \begin{subfigure}{0.49\linewidth}
        \centering
\begin{tikzpicture}[
    auto,
    node distance=1.1cm,
]
\begin{scope}[
    every node/.style={vertex},
]
    \node (1) [label=center:\(1\)] {};
    \node (6) [below of=1, label=center:\(6\)] {};
    \node (3) [right of=1, label=center:\(3\)] {};
    \node (4) [below of=3, label=center:\(4\)] {};
    \node (5) [right of=3, label=center:\(5\)] {};
    \node (2) [below of=5, label=center:\(2\)] {};
    \draw (1) -- (4) -- (5) -- (2) -- (3) -- (6);
    \draw (3) -- (4);
\end{scope}
    \path[use as bounding box] (-0.9, 1) rectangle (3.1, -2);
\end{tikzpicture}
        \caption{\(n=4\)}
    \end{subfigure}
    \begin{subfigure}{0.49\linewidth}
        \centering
\begin{tikzpicture}[
    auto,
    node distance=1.1cm,
]
\begin{scope}[
    every node/.style={vertex},
]
    \node (1) [label=center:\(1\)] {};
    \node (6) [below of=1, label=center:\(6\)] {};
    \node (5) [right of=1, label=center:\(5\)] {};
    \node (3) [right of=5, label=center:\(3\)] {};
    \node (2) [below of=5, label=center:\(2\)] {};
    \node (4) [below of=3, label=center:\(4\)] {};
    \draw (1) -- (5) -- (3);
    \draw (3) to [out=-35,in=35,looseness=9] (3);
    \draw (6) -- (2) -- (4);
    \draw (4) to [out=-35,in=35,looseness=9] (4);
\end{scope}
    \path[use as bounding box] (-0.9, 1) rectangle (3.1, -2);
\end{tikzpicture}
        \caption{\(n=5\)}
    \end{subfigure}
    \caption{The graphs \(\G^{(n)}\), for \(p = 7\) and \(2 \leq n \leq p-2\)}
    \label{fig:graphs}
\end{figure}

There is another visually apparent symmetry of the adjacency matrix \(A\): it is invariant under rotation by \(180\) degrees.
We give various interpretations of this fact in \autoref{prop:graph_symmetry}.
To give these interpretations, we make the following definitions.

\begin{definition}
Let \(T\) be the \((p-1) \times (p-1)\) matrix defined by \(T_{i,j} = [i+j = p]\).
\end{definition}

That is, \(T\) is the matrix with \(1\)s on the antidiagonal:
\renewcommand\arraystretch{1}
\[
    T =
\begin{pmatrix}
    &           & 1 \\
    & \raisebox{-2pt}{\ensuremath{\smash{\iddots}}}   &   \\
1   &           &   \\
\end{pmatrix}.
\]
\renewcommand\arraystretch{1}
It is the basis-change matrix for reversing the order of the basis, and is self-inverse.
Also:
\begin{itemize}
    \item
left-multiplying by \(T\) reflects a matrix in the horizontal midline;
    \item
right-multiplying by \(T\) reflects a matrix in the vertical midline;
    \item
conjugating by \(T\) rotates a matrix by \(180\) degrees.
\end{itemize}

\begin{definition}
Let \(\projfree{\blank}\) denote the projective-free part of a module.
\end{definition}

\begin{definition}
Recall that for an algebra \(\mathcal{A}\), the Grothendieck group \(G_0(\mathcal{A})\) is the abelian group with:
\begin{itemize}
    \item a generator \([V]\) for every \(\mathcal{A}\)-module \(V\), and
    \item a relation \([W] = [U] + [V]\) for every short exact sequence \(0 \to U \to W \to V \to 0\).
\end{itemize}
Let \(\p\) be the subgroup of the Grothendieck group \(G_0(\algclosedfield G)\) generated by classes of projective modules.
\end{definition}

Note that \(G_0(\algclosedfield G)\) can be made into a (commutative) ring via tensoring, and that \(\p\) is an ideal of this ring.
Recall that a quotient ring is naturally a (left) module for the original ring by (left) multiplication.

\begin{proposition}[Interpretations of rotational symmetry of \(A\)]
\label{prop:graph_symmetry}
The following statements hold:
\begin{enumerate}[(a)]
    \item\label{summand_implication}
\simpledivide{l}{i}{j} if and only if \simpledivide{l}{p-i}{p-j}, for all \(1 \leq i,j,l \leq p-1\);
    \item\label{TA}
\(A^{(n)} = TA^{(p-n)} = A^{(p-n)}T\);
    \item\label{TAT}
\(TAT = A\);
    \item\label{graph_automorpism}
the map \(i \mapsto p-i\) is a graph automorphism of \(\G\), and hence the induced subgraph on even vertices is isomorphic to the induced subgraph on odd vertices;
    \item\label{proj_free_iso}
\(\projfree{V_i \tensor V_j} \iso \projfree{V_{p-i} \tensor V_{p-j}}\) for all \(1 \leq i,j \leq p-1\);
    \item\label{equality_up_to_p}
\([V_i \tensor V_j] + \p = [V_{p-i} \tensor V_{p-j}] + \p\) for all \(1 \leq i, j \leq p-1\);
    \item\label{Grothendieck_group_mod_iso}
the \(\algclosedfield\)-linear automorphism \(\rho\) of \(\origfaktor{G_0(\algclosedfield G)}{\p}\) defined by \(\rho \colon [V_i] + \p \mapsto [V_{p-i}] + \p\) is \(G_0(\algclosedfield G)\)-equivariant.
\end{enumerate}
\end{proposition}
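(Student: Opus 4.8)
The plan is to reduce everything to statement \ref{summand_implication}, a purely combinatorial fact that falls straight out of \autoref{lemma:symmetry_of_division}; this is the engine of the whole proposition. By \autoref{lemma:symmetry_of_division}\ref{symmetriccondition}, ``\(V_l\) is a summand of \(V_i \tensor V_j\)'' is equivalent to: \(i+j+l\) odd, \(i+j+l < 2p\), and the three strict triangle inequalities \(l<i+j\), \(i<j+l\), \(j<l+i\). Passing from \((i,j)\) to \((p-i,p-j)\): the parity of \((p-i)+(p-j)+l = 2p-(i+j)+l\) equals that of \(i+j+l\); the condition \((p-i)+(p-j)+l < 2p\) rearranges to \(l<i+j\); the condition \(l<(p-i)+(p-j)\) rearranges to \(i+j+l<2p\); and the two remaining triangle inequalities rearrange to \(j<l+i\) and \(i<j+l\). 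So the set of conditions is unchanged, which proves \ref{summand_implication}: \(V_l\) is a summand of \(V_i \tensor V_j\) if and only if it is a summand of \(V_{p-i} \tensor V_{p-j}\).

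Statements \ref{TA}--\ref{graph_automorpism} are then formal consequences. Every entry of every \(A^{(n)}\) is \(0\) or \(1\) by \autoref{thm:CG} (the elements of a string are distinct), and \(T\) is the permutation matrix of \(i \mapsto p-i\); left-multiplying by \(T\) sends the \((i,j)\) entry to the \((p-i,j)\) entry, so \((TA^{(p-n)})_{i,j}\) indicates whether \(V_j\) is a summand of \(V_{p-i} \tensor V_{p-n}\), which by \ref{summand_implication} equals \(A^{(n)}_{i,j}\). The mirror argument (using that \autoref{lemma:symmetry_of_division}\ref{symmetriccondition} is symmetric in its three arguments) gives \(A^{(n)} = A^{(p-n)}T\), and hence \(TAT = T(TA^{(p-n)})T = A^{(p-n)}T = A^{(n)} = A\), which is \ref{TAT}. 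Since \(T\) is the self-inverse permutation matrix of \(i \mapsto p-i\), the identity \(TAT = A\) from \ref{TAT} is exactly the statement that \(i \mapsto p-i\) is an automorphism of the undirected graph \(\G\); as \(p\) is odd this permutation interchanges the odd and even vertex sets, so it restricts to an isomorphism between the two induced subgraphs, giving \ref{graph_automorpism}.

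For \ref{proj_free_iso}, note that by \autoref{thm:CG}, for \(1 \leq i,j \leq p-1\) the module \(V_i \tensor V_j\) has no \(V_p\) summand and its projective summands are all indecomposable projectives, so \(\projfree{V_i \tensor V_j}\) is exactly \(\bigoplus V_l\) over the \(l \in [p-1]\) that occur as summands of \(V_i \tensor V_j\), each with multiplicity one; by \ref{summand_implication} this index set is invariant under \((i,j)\mapsto(p-i,p-j)\), so the projective-free parts are isomorphic. Then \ref{equality_up_to_p} is immediate: every module \(M\) splits as \(\projfree{M}\) plus a projective, so \([M] \equiv [\projfree{M}] \pmod{\p}\), whence \([V_i \tensor V_j] \equiv [\projfree{V_i \tensor V_j}] = [\projfree{V_{p-i} \tensor V_{p-j}}] \equiv [V_{p-i} \tensor V_{p-j}] \pmod{\p}\).

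For \ref{Grothendieck_group_mod_iso} the first task is well-definedness of \(\rho\), and this is the one step needing actual computation. Define the \(\mathbb{Z}\)-linear map \(\sigma \colon G_0(\algclosedfield G) \to G_0(\algclosedfield G)\) on the simple basis by \(\sigma[V_i] = [V_{p-i}]\) for \(1 \leq i \leq p-1\) and \(\sigma[V_p] = [V_p]\). Writing each \([P_n]\) in the simple basis using the Loewy structures of the projective indecomposables recorded in \autoref{section:background}, one checks directly that \(\sigma[P_n] = [P_{p-n}]\) for \(1\leq n\leq p-1\) and \(\sigma[P_p]=[P_p]\); hence \(\sigma\) permutes the generators of \(\p\), so \(\sigma(\p)=\p\) and \(\sigma\) descends to an automorphism of \(\origfaktor{G_0(\algclosedfield G)}{\p}\) (which is an involution). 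Since \([V_p]\in\p\), the induced automorphism sends \([V_i]+\p\) to \([V_{p-i}]+\p\) for every \(i\in[p-1]\), so it equals \(\rho\); this proves \(\rho\) well-defined and an automorphism. For \(G_0(\algclosedfield G)\)-equivariance it suffices by linearity to verify \(\rho([V_a]\cdot([V_b]+\p)) = [V_a]\cdot\rho([V_b]+\p)\) for simple classes \([V_a],[V_b]\); the cases \(a=p\) or \(b=p\) give \(0=0\), since a tensor product involving the projective module \(V_p\) is projective. For \(a,b\in[p-1]\), arguing as in \ref{equality_up_to_p} both sides reduce modulo \(\p\) to sums of classes \([V_{p-l}]\) with \(l\in[p-1]\): the left side to the sum over \(l\) with \(V_l\) a summand of \(V_a \tensor V_b\), the right side to the sum over \(l\) with \(V_{p-l}\) a summand of \(V_a \tensor V_{p-b}\). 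These agree because \(V_l\) is a summand of \(V_a \tensor V_b\) if and only if \(V_{p-l}\) is a summand of \(V_a \tensor V_{p-b}\) — the same parity-and-inequality manipulation as in the proof of \ref{summand_implication}, combined with the symmetry of \autoref{lemma:symmetry_of_division}\ref{symmetriccondition}. I do not anticipate any conceptual difficulty beyond the bookkeeping in the well-definedness step.
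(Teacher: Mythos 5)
Your proof is correct and follows essentially the same approach as the paper: reduce everything to \ref{summand_implication}, which is the invariance of condition \ref{symmetriccondition} of \autoref{lemma:symmetry_of_division} under \((i,j) \mapsto (p-i,p-j)\), and then derive \ref{TA}--\ref{Grothendieck_group_mod_iso} as formal consequences. The parity-and-inequality bookkeeping you give for \ref{summand_implication}, the passage from \ref{TA} to \ref{TAT} and \ref{graph_automorpism}, the multiplicity-freeness argument for \ref{proj_free_iso} and \ref{equality_up_to_p}, and the reindexing calculation for equivariance in \ref{Grothendieck_group_mod_iso} all match the paper's line of reasoning (the paper organises \ref{Grothendieck_group_mod_iso} slightly differently, deducing \(\Omega^0(V_j \tensor V_{p-i}) \iso \bigoplus_{l\in\Lambda} V_{p-l}\) directly from the matrix identity \(A^{(n)} = A^{(p-n)}T\), but the content is the same).

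One place where you go further than the paper is worth calling out: you explicitly verify that \(\rho\) is well-defined, by lifting it to the \(\mathbb{Z}\)-linear involution \(\sigma\) of \(G_0(\algclosedfield G)\) with \(\sigma[V_i]=[V_{p-i}]\), \(\sigma[V_p]=[V_p]\), and checking from the Loewy structures that \(\sigma[P_n]=[P_{p-n}]\), so \(\sigma\) permutes the generators of \(\p\) and hence descends to the quotient. The paper takes \(\rho\) as given and only proves equivariance. Since \(\p\) has full rank in \(G_0(\algclosedfield G)\) (the Cartan matrix is nonsingular), the quotient \(\origfaktor{G_0(\algclosedfield G)}{\p}\) is not free and the classes \([V_i]+\p\) for \(i\in[p-1]\) satisfy nontrivial relations, so well-definedness of a map prescribed on these generators is a genuine point that merits the check you carry out. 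This is a clean and worthwhile addition rather than a different route.
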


\begin{proof}[Proof\nopunct]
Statement \ref{summand_implication} and the first equality in \ref{TA} are equivalent, and the second equality in \ref{TA} follows from the first since \(A\) and \(T\) are symmetric.
The statements \ref{TAT} and \ref{graph_automorpism} are equivalent, and are implied by \ref{TA}.
The statement \ref{proj_free_iso} clearly implies \ref{equality_up_to_p}, and given that the projective-free parts of the tensor products of simple modules are multiplicity-free sums of simple modules, both are equivalent to \ref{summand_implication}.

To see that \ref{Grothendieck_group_mod_iso} follows from \ref{TA}, let \(\Lambda \subseteq [p-1]\) be such that \(\Omega^0(V_j \tensor V_i) \iso \bigoplus_{l \in \Lambda} V_l\).
Then, by the second equality in \ref{TA}, we have \(\Omega^0(V_{j} \tensor V_{p-i}) \iso \bigoplus_{l \in \Lambda} V_{p-l}\); thus
\begin{align*}
\rho([V_j \tensor V_i] + \p)
    &= \rho\left(\sum_{l \in \Lambda} [V_{l}] + \p\right) \\
    &= \sum_{l \in \Lambda} [V_{p-l}] + \p \\
    &= [V_j \tensor V_{p-i}] + \p,
\end{align*}
as required.

Thus it suffices to show \ref{summand_implication} holds.
Indeed, condition \ref{symmetriccondition} of \autoref{lemma:symmetry_of_division} is invariant under taking both \(i \mapsto p-i\) and \(j \mapsto p-j\).
\end{proof}

\begin{remark}
The observations of \autoref{lemma:symmetry_of_division} and \autoref{prop:graph_symmetry} can be seen as observations about the fusion category corresponding to the algebraic group \(\SL_2(\algclosedfield)\) described in \cite[Section 2]{andersen1995}.
The tensor product in this category is ``reduced'', in a sense which in our setting means ``modulo projectives''.
The quotient ring \(\origfaktor{G_0(\algclosedfield)}{\p}\) is known as the fusion ring for this category.
The fusion rules state how reduced tensor products decompose in this category, and thus are specified by our \autoref{lemma:symmetry_of_division}.
The observed symmetries of these fusion rules can be deduced either axiomatically \cite[Axiom 3, p.~183]{mathieu2000} or as a consequence of the modular Verlinde formula \cite[Theorem 9.5]{mathieu2000}.
\end{remark}

We next observe that a certain submatrix of \(A\) contains all the information of \(A\), and use the resulting simplification of the structure of \(A\) to identify the connected components of \(\G\).

\begin{definition}
Let \(\bar{A}^{(n)}\) be the \(\frac{p-1}{2} \times \frac{p-1}{2}\) submatrix of (a conjugate of) \(A\) defined by
\begin{align*}
    \bar{A}^{(n)}_{i,j} &= \begin{cases}
        A^{(n)}_{2i-1,2j-1} &\text{if \(n\) is odd;} \\
        A^{(n)}_{2i-1,p+1-2j} &\text{if \(n\) is even.}
    \end{cases}
\end{align*}
\end{definition}
That is, if the vertices are reordered to \(1, 3, \ldots, p-2, p-1, p-3, \ldots, 4, 2\) (the odd integers followed by the even integers, with the former in ascending order and the latter in descending order), then \(\bar{A}\) is the upper-left block of \(A\) when \(n\) is odd and is the upper-right block of \(A\) when \(n\) is even.

\begin{lemma}\label{lemma:A_bar_props}
The matrix \(\bar{A}\) has the following properties:
\begin{enumerate}[(a)]
    \item\label{block_form_of_A}
under the ordering \(1, 3 \ldots p-2, p-1, \ldots, 4,2\), the matrix \(A\) is of the form
\renewcommand{\star}{\ast}
\[
    A = \begin{dcases}
    \begin{pmatrix}
            \bar{A} & \star      \\
            \star   & \bar{A} \\
        \end{pmatrix} &\text{if \(n\) is odd,} \\
        \begin{pmatrix}
            \star   & \bar{A} \\
            \bar{A} & \star      \\
        \end{pmatrix} &\text{if \(n\) is even,}
    \end{dcases}
\]
where \(\star\) denotes an unspecified matrix;
    \item\label{nonzero_condition}
\(\bar{A}^{(n)}_{i,j} = 1\) if and only if \(2\abs{i-j} < r < 2(i+j-1) < 2p-r\), where \(r = n\) if \(n\) is odd and \(r = p-n\) if \(n\) is even.
    \item\label{A_bar_p-n}
\(\bar{A}^{(p-n)} = \bar{A}^{(n)}\);
    \item\label{A_bar_symmetric}
\(\bar{A}\) is symmetric;
    \item\label{A_bar_regular}
for \(1 < n < p-1\), the graph with adjacency matrix \(\bar{A}\) is connected.
\end{enumerate}
\end{lemma}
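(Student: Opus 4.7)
The proof splits naturally into two easy parts ((a) and (b)), two immediate corollaries of (b) ((c) and (d)), and then the substantive part (e). For (a), I would combine two ingredients: the Clebsch--Gordan rule (\autoref{thm:CG}), which implies every summand $V_j$ of $V_i \tensor V_n$ satisfies $j \equiv i+n-1 \pmod*{2}$, giving the block-diagonal pattern when $n$ is odd and the block-anti-diagonal pattern when $n$ is even; and \autoref{prop:graph_symmetry}\ref{TAT}, which ensures that under the chosen reordering (where position $(p-1)/2+k$ corresponds to vertex $p-(2k-1)$) the two matching blocks coincide and equal $\bar{A}^{(n)}$. For (b), I substitute the index triple $(2i-1,\ n,\ 2j-1)$ (when $n$ is odd) or $(2i-1,\ n,\ p+1-2j)$ (when $n$ is even) into condition~\ref{symmetriccondition} of \autoref{lemma:symmetry_of_division}: the parity requirement becomes automatic, and the three remaining inequalities rearrange, after routine linear algebra, to the symmetric form $2\abs{i-j} < r < 2(i+j-1) < 2p-r$. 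Parts (c) and (d) are then immediate: the condition in (b) depends only on $r$, which is unchanged by $n \mapsto p-n$; and it is symmetric in $i$ and $j$.

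For (e), by (c) I may assume $n$ is odd, so $r = n$ with $3 \leq n \leq p-2$. I would prove the stronger statement that any two consecutive vertices $i$ and $i+1$ (for $1 \leq i \leq (p-3)/2$) share a common neighbour, which gives connectedness by chaining across $i$. From (b), the neighbour set of a vertex $v$ consists of integers $j$ in the interval
\[
\left[\max\left\{v - \tfrac{n-1}{2},\ \tfrac{n+3}{2} - v,\ 1\right\},\ \min\left\{v + \tfrac{n-1}{2},\ p - \tfrac{n-1}{2} - v,\ \tfrac{p-1}{2}\right\}\right].
\]
Intersecting the neighbour sets of $i$ and $i+1$, a common neighbour exists iff
\[
\max\left\{i+1-\tfrac{n-1}{2},\ \tfrac{n+3}{2}-i,\ 1\right\} \leq \min\left\{i + \tfrac{n-1}{2},\ p-\tfrac{n+1}{2}-i,\ \tfrac{p-1}{2}\right\}.
\]
This expands into nine pairwise inequalities of the form $L_a \leq U_b$ ($a,b \in \{1,2,3\}$), each of which simplifies to one of the standing hypotheses $n \geq 2$, $n \leq p-2$, or $1 \leq i \leq (p-3)/2$. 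Since $n$ is odd every bound is an integer, so the intersection contains a vertex $j$ (possibly $j=i$ or $j=i+1$ via a loop, which still places $i$ and $i+1$ in the same component).

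The harder part by far is (e); parts (a)--(d) are almost immediate once (b) is in hand. The main subtlety in (e) is that the naive attempt to show the edge $\{i, i+1\}$ always exists fails: the strip lower bound $r < 2(i+j-1)$ excludes this edge for small $i$ once $n$ is moderately large. The common-neighbour refinement circumvents this, at the cost of having to verify the nine inequalities above, but each is a short linear computation that uses the hypotheses $3 \leq n \leq p-2$ and $1 \leq i < (p-1)/2$ tightly (for instance, $L_2 \leq U_2$ becomes an equality exactly when $n = p-2$), confirming these are the correct ranges.
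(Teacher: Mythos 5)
Your proposal is correct and follows essentially the same route as the paper: part (a) via the \(TAT\) symmetry of \autoref{prop:graph_symmetry}, part (b) by substituting the reindexed triples into condition \ref{symmetriccondition} of \autoref{lemma:symmetry_of_division}, with (c) and (d) immediate consequences. The only difference is in (e), where the paper simply observes that the nonzero entries of \(\bar{A}\) fill a band (drawn in \autoref{fig:A_bar}) and declares connectedness ``clear'' for \(r \neq 1\); your common-neighbour argument for consecutive vertices is a correct and rigorous elaboration of exactly that picture, and your remark that the edge \(\{i,i+1\}\) itself can be absent (so that the two-step argument is genuinely needed) is a worthwhile point the paper glosses over.
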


\begin{proof}[Proof\nopunct]
By \autoref{prop:graph_symmetry}\ref{TAT} we have \(A_{2i-1,2j-1} = A_{p+1-2i,p+1-2j}\), and so (under the new ordering) the upper-left and lower-right blocks of \(A\) are the same. Similarly the upper-right and lower-left blocks are the same, and \ref{block_form_of_A} follows.

The condition for \(\bar{A}_{i,j}\) to be nonzero is obtained from condition \ref{symmetriccondition} of \autoref{lemma:symmetry_of_division} with the appropriate values of \(i\) and \(j\) substituted.
Properties \ref{A_bar_p-n} and \ref{A_bar_symmetric} are easily verified using this condition.

It follows from \ref{nonzero_condition} that \(\bar{A}\) has nonzero entries precisely in a rectangle bounded by the straight lines determined by these inequalities; we draw matrix \(\bar{A}\) in \autoref{fig:A_bar}.
The connectedness of its graph is then clear provided \(r \neq 1\).
\end{proof}

\begin{figure}
\setlength{\padlen}{\widthof{\(\ddots\)}}
\setlength{\padlen}{0.965\padlen}
\renewcommand\arraystretch{1.3}
\[
\begin{blockarray}{*{16}{c}}
\begin{block}{*{16}{>{\scriptscriptstyle}c}}
&  1   &   2   & \ldots &      &       &   \mathclap{\frac{r+1}{2}}   &       &       &       &       &       &       &       &       & \\
\end{block}
\begin{block}{>{\scriptscriptstyle}c(*{14}{>{\mathllap}c})>{\scriptscriptstyle}c}
1&     & \pad{}&       & \pad{}&       &   1   &       &       &       & \pad{}&       & \pad{}&       &       & \\
2&     &       &       &       &   1   &   1   &   1   &       &       &       &       &       &       &       & \\
\vdots&&       &       &   1   &   1   &   1   &   1   &   1   &       &       &       &       &       &       & \\
&      &       &\iddots&       &\iddots&       &\ddots &       &\ddots &       &       &       &       &       & \\
&      &   1   &   1   &   1   &   1   &   1   &   1   &   1   &   1   &   1   &       &       &       &       & \\
\frac{r+1}{2}&1& 1 & 1 &   1   &   1   &   1   &   1   &   1   &   1   &   1   &   1   &       &       &       & \\
&      &   1   &   1   &   1   &   1   &   1   &   1   &   1   &   1   &   1   &   1   &   1   &       &       & \\
&      &       &\ddots &       &\ddots &       &\ddots &       &\ddots &       &\ddots &       &\ddots &       & \\
&      &       &       &   1   &   1   &   1   &   1   &   1   &   1   &   1   &   1   &   1   &   1   &   1   & \frac{p-r}{2} \\
&      &       &       &       &   1   &   1   &   1   &   1   &   1   &   1   &   1   &   1   &   1   &   1   & \frac{p-r+2}{2} \\
&      &       &       &       &       &\ddots &       &\ddots &       &       &\iddots&       &\iddots&       & \\
&      &       &       &       &       &       &   1   &   1   &   1   &   1   &   1   &   1   &       &       & \\
&      &       &       &       &       &       &       &   1   &   1   &   1   &   1   &       &       &       & \\
&      &       &       &       &       &       &       &       &   1   &   1   &       &       &       &       & \frac{p-1}{2} \\
\end{block}
\begin{block}{*{16}{>{\scriptscriptstyle}c}}
&      &       &       &       &       &&&&\mathclap{\frac{p-r}{2}\phantom{p1}}&\mathclap{\phantom{p1}\frac{p-r+2}{2}}&&&&\mathclap{\frac{p-1}{2}}&\\
\end{block}
\end{blockarray}
\]
    \caption{The matrix \(\bar{A}\) (here with \(r < p - r\)), where \(r = n\) if \(n\) is odd and \(r = p-n\) if \(n\) is even}
    \label{fig:A_bar}
\end{figure}

\begin{lemma}\label{prop:parity_observations}{\ }
\begin{enumerate}[(a)]
    \item\label{odds_disconnected}
If \(n\) is odd, then \(\G\) is disconnected, with each connected component a subset of either the odd integers or the even integers.
    \item\label{evens_bipartite}
If \(n\) is even, then \(\G\) is bipartite, with classes the odd integers and the even integers.
    \item\label{tensor_form_of_A}
When the vertices are ordered as \(1, 3, \ldots, p-2, p-1, p-3, \ldots, 4, 2\), we have
\[
    A = \bar{A} \tensor \begin{pmatrix}
    0 & 1 \\
    1 & 0 \\
    \end{pmatrix}^{n+1}.
\]
\end{enumerate}
\end{lemma}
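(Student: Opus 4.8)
The plan is to derive all three statements from the parity condition already present in condition~\ref{symmetriccondition} of \autoref{lemma:symmetry_of_division}, together with the block description of $A$ in \autoref{lemma:A_bar_props}\ref{block_form_of_A}; essentially no new work is needed.

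First I would record the edge rule: vertices $i$ and $j$ are adjacent in $\G = \G^{(n)}$ exactly when \simpledivide{j}{i}{n}, which by \autoref{lemma:symmetry_of_division} is equivalent to condition~\ref{symmetriccondition} for the triple $(i,j,n)$ and in particular forces $i+j+n\equiv 1\pmod 2$. If $n$ is odd this says every edge joins two vertices of the same parity, so $\G$ has no edge between an odd vertex and an even vertex; hence no walk crosses between the two parity classes, and every connected component lies inside the odd integers or inside the even integers, which is part~\ref{odds_disconnected}. If $n$ is even the same congruence says every edge joins vertices of opposite parity (and, incidentally, that there are no loops, since a loop at $i$ would need $2i+n$ odd), so the partition of $[p-1]$ into odd and even vertices is a bipartition, giving part~\ref{evens_bipartite}.

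For part~\ref{tensor_form_of_A}, I would fix the ordering $1,3,\dots,p-2,p-1,p-3,\dots,4,2$ of \autoref{lemma:A_bar_props}, so that by \autoref{lemma:A_bar_props}\ref{block_form_of_A} two of the four $\tfrac{p-1}{2}\times\tfrac{p-1}{2}$ blocks of $A$ equal $\bar A$: the diagonal pair when $n$ is odd, the antidiagonal pair when $n$ is even. By part~\ref{odds_disconnected}, when $n$ is odd the two off-diagonal blocks (recording odd-to-even adjacencies) vanish, so $A=\left(\begin{smallmatrix}\bar A&0\\0&\bar A\end{smallmatrix}\right)$; by part~\ref{evens_bipartite}, when $n$ is even the two diagonal blocks (recording same-parity adjacencies) vanish, so $A=\left(\begin{smallmatrix}0&\bar A\\\bar A&0\end{smallmatrix}\right)$. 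Since $\left(\begin{smallmatrix}0&1\\1&0\end{smallmatrix}\right)^2=\left(\begin{smallmatrix}1&0\\0&1\end{smallmatrix}\right)$, the matrix $\left(\begin{smallmatrix}0&1\\1&0\end{smallmatrix}\right)^{n+1}$ is the identity when $n$ is odd and equals $\left(\begin{smallmatrix}0&1\\1&0\end{smallmatrix}\right)$ when $n$ is even; in each case the block form of $A$ just obtained is precisely $\bar A\tensor\left(\begin{smallmatrix}0&1\\1&0\end{smallmatrix}\right)^{n+1}$, the Kronecker product being normalised so that the $2\times 2$ second factor records the coarse block structure (odds versus evens) of the chosen ordering.

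I do not expect a genuine obstacle here: the mathematical content is inherited wholesale from \autoref{lemma:symmetry_of_division} and \autoref{lemma:A_bar_props}. The only points demanding care are bookkeeping ones --- observing that ``no edges between the classes'' immediately upgrades to ``no walks between the classes'' for part~\ref{odds_disconnected}, and pinning down the side and convention of the Kronecker product in part~\ref{tensor_form_of_A} so that its $2\times 2$ factor, rather than its $\tfrac{p-1}{2}\times\tfrac{p-1}{2}$ factor, governs the block partition of $A$ under the ordering $1,3,\dots,p-2,p-1,p-3,\dots,4,2$.
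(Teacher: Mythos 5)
Your proposal is correct and follows the paper's argument essentially verbatim: parts \ref{odds_disconnected} and \ref{evens_bipartite} are extracted from the parity constraint already implicit in \autoref{lemma:symmetry_of_division} (the paper phrases this via the $\nmstr$-strings, you via condition~\ref{symmetriccondition}, but these are the same observation), and part~\ref{tensor_form_of_A} combines those vanishing blocks with \autoref{lemma:A_bar_props}\ref{block_form_of_A}. Your explicit remark about normalising the Kronecker product so that the $2\times 2$ factor governs the coarse block structure is a worthwhile caution, as under the more common convention the factors would need to be written in the opposite order to match the ordering $1,3,\ldots,p-2,p-1,\ldots,4,2$; the paper leaves this tacit.
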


\begin{proof}[Proof\nopunct]
Let \(1 \leq i \leq p-1\). Observe that the neighbours of \(i\) are all elements of \(\nmstring{i}{n}\) or \(\nmstring{n}{i}\) (according to whether \(i \geq n\) or \(i \leq n\)).
Furthermore, elements of these strings are all of the same parity, which is the parity of \(i+n-1\).
Thus if \(n\) is odd, the neighbours of \(i\) are of the same parity as \(i\), whilst if \(n\) is even, the neighbours of \(i\) are of different parity to \(i\).
The statements \ref{odds_disconnected} and \ref{evens_bipartite} are then immediate.

That is, under the new ordering, when \(n\) is even the diagonal blocks of \(A\) are zero, and when \(n\) is odd the off-diagonal blocks are zero. The expression as a Kronecker product then follows from \autoref{lemma:A_bar_props}\ref{block_form_of_A}.
\end{proof}

\begin{proposition}\label{lemma:connected_components}{\ }
\begin{enumerate}[(a)]
    \item\label{odds_two_components}
If \(n\) is odd and \(n > 1\), then \(\G\) has precisely two connected components, the odd integers and the even integers, and they are isomorphic.
    \item\label{evens_connected}
If \(n\) is even and \(n < p-1\), then \(\G\) is connected.
\end{enumerate}
\end{proposition}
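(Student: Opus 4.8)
The plan is to read off both parts from the Kronecker-product description \(A = \bar{A} \tensor \begin{psmallmatrix}0&1\\1&0\end{psmallmatrix}^{n+1}\) of \autoref{prop:parity_observations}\ref{tensor_form_of_A}, together with the connectedness of the graph with adjacency matrix \(\bar{A}\), which is \autoref{lemma:A_bar_props}\ref{A_bar_regular}. In both parts the hypotheses force \(1 < n < p-1\) --- for part \ref{odds_two_components} this is because \(p-1\) is even, so an odd \(n > 1\) automatically satisfies \(n \leq p-2\) --- so that \autoref{lemma:A_bar_props}\ref{A_bar_regular} is available.

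For \ref{odds_two_components}: since \(n+1\) is even, \(\begin{psmallmatrix}0&1\\1&0\end{psmallmatrix}^{n+1}\) is the identity, so in the ordering that lists the odd vertices before the even vertices the matrix \(A\) is block-diagonal with both diagonal blocks equal to \(\bar{A}\) and zero off-diagonal blocks. As the graph of \(\bar{A}\) is connected and nonempty (it has \(\frac{p-1}{2} \geq 1\) vertices), each block contributes exactly one connected component, so \(\G\) has precisely the two claimed components. They are isomorphic by \autoref{prop:graph_symmetry}\ref{graph_automorpism} --- the automorphism \(i \mapsto p-i\) swaps the two classes because \(p\) is odd --- or, more directly, because both induced subgraphs have adjacency matrix \(\bar{A}\).

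For \ref{evens_connected}: now \(n+1\) is odd, so in the same ordering \(A\) is block \emph{anti}-diagonal with both off-diagonal blocks equal to \(\bar{A}\) (here using the symmetry of \(\bar{A}\) from \autoref{lemma:A_bar_props}\ref{A_bar_symmetric}). Writing \(u_1, \dots, u_k\) and \(w_1, \dots, w_k\), with \(k = \frac{p-1}{2}\), for the odd and even vertices respectively, this says the edges of \(\G\) are exactly the pairs \(u_a w_b\) with \(\bar{A}_{a,b} = 1\). I would then lift walks: a walk \(a = c_0, c_1, \dots, c_t\) in the graph of \(\bar{A}\) lifts to the walk \(u_{c_0}, w_{c_1}, u_{c_2}, \dots\) in \(\G\), ending at \(u_{c_t}\) if \(t\) is even and at \(w_{c_t}\) if \(t\) is odd. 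To control the parity I would use that the graph of \(\bar{A}\) has a loop: the criterion in \autoref{lemma:A_bar_props}\ref{nonzero_condition}, applied with \(i = j\) and \(r = p-n\), holds for some \(c \in \set{1, \dots, k}\) (one may take \(c\) to be the least integer exceeding \(\frac{r+2}{4}\), and check \(c \leq \frac{p-1}{2}\)). Then, for any \(u_a\), I take a walk in the graph of \(\bar{A}\) from \(a\) to \(c\) and, if its length is even, append the edge \(u_c w_c\); this connects \(u_a\) to \(w_c\) in \(\G\). Symmetrically every \(w_a\) connects to \(u_c\), and \(u_c w_c\) is an edge, so \(\G\) is connected.

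The only step that is not pure bookkeeping is the verification that the graph of \(\bar{A}\) has a loop (equivalently, is not bipartite); this is precisely what lets a walk in \(\G\) between vertices in different classes of the bipartition be arranged to have the required endpoints. Given \autoref{lemma:A_bar_props}\ref{nonzero_condition}, this verification is a short inequality check.
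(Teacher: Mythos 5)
Your proposal is correct and follows essentially the same route as the paper: both parts reduce to the structure of \(\bar{A}\) (block-diagonal with two \(\bar{A}\) blocks when \(n\) is odd, block-antidiagonal when \(n\) is even), the connectedness of the graph of \(\bar{A}\) from \autoref{lemma:A_bar_props}\ref{A_bar_regular}, and — for part \ref{evens_connected} — a diagonal nonzero entry of \(\bar{A}\) to break the parity obstruction coming from the bipartition of \(\G\). The paper phrases (b) via the quotient graph of \(\G\) under \(i \sim p-i\) rather than by lifting walks, but the content is identical. Two small remarks: first, the paper locates its loop of \(\bar{A}\) at position \(\frac{n+1}{2}\), which is not an integer when \(n\) is even, so your attempt to pin down the location more carefully is reasonable; second, you should also check that your chosen \(c\) satisfies \(c < \frac{2p-r+2}{4}\) (the upper inequality in \autoref{lemma:A_bar_props}\ref{nonzero_condition}), not just \(c \leq \frac{p-1}{2}\) — this holds because the open interval \(\bigl(\frac{r+2}{4}, \frac{2p-r+2}{4}\bigr)\) has width \(\frac{p-r}{2} = \frac{n}{2} \geq 1\) and its left endpoint is not an integer, but it is worth stating.
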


\begin{proof}[Proof\nopunct]
For \(n\) odd, \(\bar{A}\) is the adjacency matrix for the subgraphs of \(\G\) on odd vertices and on even vertices, so \ref{odds_two_components} follows immediately from \autoref{lemma:A_bar_props}\ref{A_bar_regular}.

For \(n\) even, \(\bar{A}\) is the adjacency matrix for the quotient graph of \(\G\) with \(i\) and \(p-i\) identified. Again using \autoref{lemma:A_bar_props}\ref{A_bar_regular}, since \(\G\) is bipartite (with each of \(i\) and \(p-i\) in a distinct class), to show \ref{evens_connected} it suffices to show that \(i\) is reachable from \(p-i\) for some \(i\). Indeed, \(\bar{A}\) has a nonzero diagonal entry (at \(\frac{n+1}{2}\)), and so the two vertices identified to form the corresponding vertex of the quotient are adjacent.
\end{proof}

We conclude this section by finding the degrees of the vertices in \(\G\).
The degree of \(i\) in \(\G\) is also the number of nonzero entries in the \(i\)th row of \(A\), and is the number of non-projective indecomposable summands of \(V_i \tensor V_n\).

\begin{definition}\label{def:degree}
For \(1 \leq i \leq p-1\), let \(d(i)\) be the degree of \(i\) in \(\G\) (where a loop is considered to contribute \(1\) to the degree).
The dependence of \(d\) on \(n\) is suppressed, since it is always clear from context.
\end{definition}

\begin{lemma}\label{lemma:d(i)values}
For \(1 \leq i \leq p-1\), we have
\[
d(i) = \min\set{i, p-i, n, p-n}.
\]
Furthermore,
\[
    \sum_{i=1}^{p-1} d(i) = n(p-n).
\]
\end{lemma}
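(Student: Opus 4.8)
The plan is to read \(d(i)\) off \autoref{thm:CG} and then evaluate the sum by exchanging the order of summation. First I would fix \(i\) and apply \autoref{thm:CG} to \(V_i \tensor V_n\), writing \(a = \max\set{i,n}\) and \(b = \min\set{i,n}\); since \(a,b \leq p-1\) the hypotheses are met and the exceptional summand \(\one{n=m=p}V_p\) is absent. The theorem exhibits \(V_i \tensor V_n\) as a multiplicity-free direct sum indexed by \(\nmstring{a}{b} \cap [p]\), in which an index \(j\) contributes a copy of \(P_j\) when \(2p-j \in \nmstring{a}{b}\) and a copy of \(V_j\) otherwise. As \(j=p\) satisfies \(2p-j = p \in \nmstring{a}{b}\) precisely when \(p \in \nmstring{a}{b}\), every \(V_j\) that occurs has \(j \leq p-1\) and so is non-projective (the only projective simple module being \(V_p \iso P_p\)). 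Hence \(d(i)\) is the number of \(j \in \nmstring{a}{b} \cap [p]\) with \(2p-j \notin \nmstring{a}{b}\). The crucial observation is that, for \(j\) in the string, this condition is equivalent to the single inequality \(j \leq 2p-a-b\): the parity of \(2p-j\) automatically matches that of the string, the bound \(a-b+1 \leq 2p-j\) is automatic (rearranging to \(j \leq 2p-a+b-1\), which follows from \(j \leq a+b-1 \leq 2p-a+b-1\) since \(a \leq p\)), and the bound \(2p-j \leq a+b-1\) rearranges to \(j \geq 2p-a-b+1\).

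Thus \(d(i)\) counts the integers \(j\) lying in \([\,\abs{i-n}+1,\ \min\set{i+n-1,\,2p-i-n}\,]\) of the same parity as \(i+n-1\) (the constraint \(j \leq p\) coming from \([p]\) being redundant, as \(\min\set{i+n-1,\,2p-i-n} \leq p-\tfrac12\)). I would check this interval is nonempty using \(i,n \leq p-1\), and then counting the integers of the correct parity gives \(\min\set{i,n}\) when \(i+n \leq p\) (so the right endpoint is \(i+n-1\)) and \(\min\set{p-i,p-n}\) when \(i+n \geq p\) (so the right endpoint is \(2p-i-n\)), using \(\tfrac{x+y-\abs{x-y}}{2} = \min\set{x,y}\). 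Since \(i+n \leq p\) forces \(\min\set{i,n} \leq \min\set{p-i,p-n}\) and \(i+n \geq p\) forces the reverse, in both cases \(d(i) = \min\set{i,p-i,n,p-n}\), as claimed.

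For the closing identity I would view \(\sum_{i=1}^{p-1} d(i)\) as the number of pairs \((i,k)\) with \(1 \leq i \leq p-1\) and \(1 \leq k \leq \min\set{i,p-i,n,p-n}\), counted by fixing \(k\) first. The constraints then read \(1 \leq k \leq \min\set{n,p-n}\) and \(k \leq i \leq p-k\); writing \(m = \min\set{n,p-n}\), each \(k \in [m]\) admits exactly \(p-2k+1\) values of \(i\), so \(\sum_{i=1}^{p-1} d(i) = \sum_{k=1}^{m}(p-2k+1) = m(p+1) - m(m+1) = m(p-m) = n(p-n)\).

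There is no real obstacle here; the one delicate point is the bookkeeping in the first two steps — isolating exactly which elements of \(\nmstring{a}{b}\) give non-projective summands, and then handling the parity-versus-minimum case split cleanly — so that is where I would be most careful.
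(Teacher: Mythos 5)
Your proof is correct and the first (and main) part — reading \(d(i)\) off \autoref{thm:CG} as the count of \(j \in \nmstring{a}{b}\) with \(2p-j \notin \nmstring{a}{b}\), then isolating this to the single inequality \(j \leq 2p-a-b\) and counting integers of the right parity in an interval — is essentially the same argument as the paper's, which likewise counts string elements violating the pairing condition, after reducing to \(i \leq n\) by the \(i \leftrightarrow n\) symmetry rather than your \(a=\max\), \(b=\min\) bookkeeping. Where you genuinely diverge is the closing identity: the paper evaluates \(\sum_i \min\set{i,p-i,n,p-n}\) head-on (folding in half, splitting at \(i=m\), and summing the arithmetic pieces), whereas you double-count pairs \((i,k)\) with \(1 \leq k \leq d(i)\) and swap the order of summation to get \(\sum_{k=1}^m (p-2k+1) = m(p-m)\). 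The double-counting is slightly slicker and avoids the case split in the final sum, at the cost of the small observation that the constraint set in \(k\) is exactly \([m]\). One point you were right to flag as delicate: when \(i+n \geq p\), the right endpoint \(2p-i-n\) has the opposite parity to the string elements, so the naive "\((\beta-\alpha)/2 + 1\)" count must be adjusted by one — the arithmetic still lands on \(\min\set{p-i,p-n}\) via \(\tfrac{x+y-\abs{x-y}}{2}\) with \(x = p-i\), \(y = p-n\), but a written-up version should make that parity adjustment explicit rather than appearing to apply the same endpoint formula in both cases.
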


\begin{proof}[Proof\nopunct]
Clearly \(d(i)\) is symmetric in \(i\) and \(n\), so for the first equality it suffices to show that \(d(i) = \min\set{i,p-n}\) when \(i \leq n\).
By \autoref{thm:CG}, the number of simple non-projective summands of \(V_n \tensor V_i\) is the number of elements \(j\) of \(\nmstring{n}{i}\) for which \(2p-j \notin \nmstring{n}{i}\).

If \(i + n -1 < p\) (equivalently, \(i \leq p-n\)) then this is all the elements of \(\nmstring{n}{i}\), of which there are \(i\).

If \(i + n - 1 \geq p\) (equivalently, \(i > p-n\)), then the number of \(j \in \nmstring{n}{i}\) such that \(2p-j \in \nmstring{n}{i}\) is
\[
    2 \floor{\frac{i+n-1-p}{2}} + [\text{\(i+n-1\) is odd}]
    = i+n- p,
\]
and so \(d(i) = i - (i+n-p) = p-n\).

We now find the sum of the \(d(i)\).
Let \(m = \min\set{n, p-n}\).
We have:
\begin{align*}
    \sum_{i=1}^{p-1} d(i)
        &= \sum_{i=1}^{p-1} \min\set{i, p-i, n, p-n} \\
        &= 2\sum_{i=1}^{\frac{p-1}{2}} \min\set{i, m} \\
        &= 2\left(\frac{p-1}{2} - m\right)m
            + 2\sum_{i=1}^{m} i \\
        &= m(p-1-2m) + m(m+1) \\
        &= m(p-m) \\
        &= n(p-n)
\end{align*}
as required.
\end{proof}

\section{Random walks on indecomposable modules}
\label{section:random_walks}


We investigate the long-run behaviour of tensoring by a fixed simple \(\algclosedfield G\)-module by considering the Markov chain defined below.
In particular, we assess the properties of reversibility, diagonalisability, irreducibility and periodicity, as well as calculating stationary distributions and mixing times.

\begin{definition}[Non-projective summand random walk]
Fix \(n \in [p-1]\), \(w\) a function that assigns a positive weight to each non-projective indecomposable \(\algclosedfield G\)-module, and \(\nu\) a distribution on the non-projective simple \(\algclosedfield G\)-modules.
Let the \emph{non-projective summand random walk} be the (discrete time) Markov chain on the set of non-projective indecomposable \(\algclosedfield G\)-modules
with initial distribution \(\nu\) in which the probability of a step from \(U\) to \(V\) is
\[
    Q^{(n)}_{UV} = \frac
    {w(V) \multiplicity{V}{U \tensor V_n}}
    {\sum_{M} w(M) \multiplicity{M}{U \tensor V_n}},
\]
where the sum is over all non-projective indecomposable modules \(M\) (and \(\multiplicity{\ }{\ }\) denotes multiplicity as an indecomposable summand, as in \autoref{section:matrix}).
The parameter \(n\) is suppressed unless there is need to emphasise it.
\end{definition}

\begin{remark}{\ }
\begin{enumerate}[(i)]
    \item 
If \(U\) is a simple non-projective \(\algclosedfield G\)-module, \autoref{thm:CG} implies that \(U \tensor V_n\) indeed has non-projective indecomposable summands, and that these summands are simple. 
Thus the chain is well-defined and remains on simple non-projective \(\algclosedfield G\)-modules throughout.
The states of the chain can therefore be labelled with the dimensions of the modules, taking values in the finite set \([p-1]\).

    \item
\autoref{thm:CG} also implies that the non-projective part of a tensor product of simple modules is multiplicity-free, so \(\multiplicity{M}{U \tensor V_n} \in \set{0,1}\) for all \(M\).

    \item
If we were to allow steps to projective indecomposable modules, these modules would form an absorbing set (in the sense that once the chain hit a projective module it would stay on projective modules for all time).
This definition allows us to consider a recurrent chain on the (non-projective) simple modules.

    \item
There are two trivial cases to be excluded: if \(n=1\), we never step away from the initial state;
if \(n=p-1\), then \(V_{p-i}\) is the unique non-projective indecomposable summand of \(V_i \tensor V_{p-1}\), so at each step we switch between the initial state \(i\) and \(p-i\). From now on we assume \(2 \leq n \leq p-2\).
\end{enumerate}
\end{remark}

An illustrative example of our chain is given below.
Note that when \(w \equiv 1\), the summands are chosen uniformly at random;
this case, and the case where \(w(i)=i\) (in which modules are weighted by their dimension), are described for general \(n\) at the end of this section.

\begin{example}
Suppose \(w \equiv 1\) and \(n=2\).
We have that
\[
    V_i \tensor V_2 \iso \begin{dcases}
        V_2 &\text{if \(i=1\),} \\
        V_{i-1} \oplus V_{i+1} &\text{if \(2 \leq i \leq p-2\),} \\ 
        V_{p-2} \oplus P_p &\text{if \(i=p-1\).}
    \end{dcases}
\]
Thus the non-projective summand random walk is a symmetric random walk in one dimension with reflecting boundaries. The transition matrix is
\[
\begin{pmatrix}
        & 1       &         &         &         &         &         &         \\
\frac12 &         & \frac12 &         &         &         &         &         \\
        & \frac12 &         & \frac12 &         &         &         &         \\
        &         & \frac12 &         & \frac12 &         &         &         \\
        &         &         & \ddots  &         & \ddots  &         &         \\
        &         &         &         & \frac12 &         & \frac12 &         \\
        &         &         &         &         & \frac12 &         & \frac12 \\
        &         &         &         &         &         & 1       &         \\
\end{pmatrix}
\]
and the stationary distribution is \(\frac{1}{2(p-2)}(1,2,2,\ldots,2,1)\).
\end{example} 

Our key observation while studying the non-projective summand random walk is that it is the random walk on the graph \(\G\) (defined in \autoref{section:matrix}) in which the probability of moving from a vertex \(i\) to a neighbour \(j\) is proportional to \(w(j)\).
Indeed, the transition matrix \(Q\) has nonzero entries precisely where \(A\) (the adjacency matrix for \(\G\)) does, and in both cases the transition probabilities are proportional to the weight of the destination.
That is,
\[
    Q_{i,j} = \frac{w(j)}{\sum_{il \in E(\G)}w(l)} A_{i,j}.
\]

We use the properties of \(\G\) given in \autoref{section:matrix} to shed light on the non-projective summand random walk.
The first relevant property of \(\G\) is that it is undirected, which implies that the communicating classes of our Markov chain are all closed (that is, they are irreducible chains themselves) and they are precisely the connected components of \(\G\).
Moreover, by the following lemma, it implies the chain is reversible and diagonalisable, and we are able to find a stationary distribution.

\begin{lemma}\label{lemma:walk_on_a_graph}
Let \(\Hscr\) be any finite graph (with loops) and \(u\) a function assigning a positive weight to each vertex of \(\Hscr\).
Let \(R\) be the transition matrix for the random walk on \(\Hscr\) defined by
\[
    R_{i,j} = \frac{u(j)}{\sum_{il \in E(\Hscr)} u(l)} \one{ij \in E(\Hscr)}.
\]
Let \(\pi\) be the distribution defined by
\[
    \pi_i = \frac{u(i) \sum_{il \in E(\Hscr)} u(l)}{C}.
\]
where \(C = \sum_{x \in V(\Hscr)}\sum_{xy \in E(\Hscr)} u(y)\).

Then \(\pi\) is a stationary 
distribution in detailed balance with \(R\), and the random walk is reversible and diagonalisable.
\end{lemma}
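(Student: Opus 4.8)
The plan is to verify the detailed balance equations directly and then invoke the standard consequences. First I would check that $\pi$ is a genuine probability distribution: each $\pi_i$ is positive since $u$ takes positive values and every vertex of $\Hscr$ has at least one incident edge (or else the walk is not defined there), and $\sum_i \pi_i = \frac{1}{C}\sum_{x \in V(\Hscr)} u(x)\sum_{xy \in E(\Hscr)} u(y) = 1$ by the very definition of $C$. This is a routine bookkeeping step.

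The heart of the argument is the detailed balance identity $\pi_i R_{i,j} = \pi_j R_{j,i}$ for all vertices $i, j$. Expanding the left-hand side,
\[
    \pi_i R_{i,j}
    = \frac{u(i)\sum_{il \in E(\Hscr)} u(l)}{C}
      \cdot \frac{u(j)}{\sum_{il \in E(\Hscr)} u(l)}\one{ij \in E(\Hscr)}
    = \frac{u(i)u(j)}{C}\one{ij \in E(\Hscr)},
\]
where the factor $\sum_{il \in E(\Hscr)} u(l)$ cancels. This expression is manifestly symmetric in $i$ and $j$ (using that $ij \in E(\Hscr)$ if and only if $ji \in E(\Hscr)$, since $\Hscr$ is undirected), so it equals $\pi_j R_{j,i}$ by the same computation with the roles of $i$ and $j$ swapped. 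Summing the detailed balance equations over $i$ gives $\sum_i \pi_i R_{i,j} = \sum_i \pi_j R_{j,i} = \pi_j \sum_i R_{j,i} = \pi_j$, so $\pi$ is stationary; and detailed balance is exactly the definition of reversibility.

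For diagonalisability, the standard observation is that detailed balance makes $R$ self-adjoint with respect to the inner product $\langle f, g\rangle_\pi = \sum_i \pi_i f(i) g(i)$: indeed $\langle Rf, g\rangle_\pi = \sum_{i,j} \pi_i R_{i,j} f(j) g(i) = \sum_{i,j} \pi_j R_{j,i} f(j) g(i) = \langle f, Rg\rangle_\pi$ using the symmetry just established. A self-adjoint operator on a finite-dimensional inner product space is diagonalisable (with real eigenvalues); equivalently, the matrix $D^{1/2} R D^{-1/2}$, where $D = \operatorname{diag}(\pi_i)$, is symmetric, hence orthogonally diagonalisable, and $R$ is conjugate to it. I do not expect any real obstacle here — the only point requiring a moment's care is confirming that the normalising sum $\sum_{il \in E(\Hscr)} u(l)$ appearing in $R_{i,j}$ is the same quantity as the one appearing in $\pi_i$, so that it cancels cleanly in the detailed balance computation; this is immediate from the definitions once one notes both sums run over edges incident to the same vertex $i$.
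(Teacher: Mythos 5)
Your proposal is correct and follows essentially the same route as the paper: verify the detailed balance equations $\pi_i R_{i,j} = \pi_j R_{j,i}$ by the same cancellation of the normalising factor, and deduce stationarity and reversibility from there. The only difference is that you spell out the diagonalisability argument (self-adjointness with respect to the $\pi$-weighted inner product, or equivalently symmetry of $D^{1/2}RD^{-1/2}$) whereas the paper simply cites a reference for the fact that reversibility implies diagonalisability.
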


\begin{proof}[Proof\nopunct]
It suffices to verify the detailed balance equations for \(\pi\) (noting that diagonalisability follows from reversibility \cite[Section 2.4]{Pistone2013}).
Observe:
\begin{align*}
\pi_i R_{i,j}
    &=  \frac{u(j)}{\sum_{il \in E(\Hscr)} u(l)} \frac{u(i) \sum_{il \in E(\Hscr)} u(l)}{C} \one{ij \in E(\Hscr)} \\
    &= \frac{u(i)u(j)}{C} \one{ij \in E(\Hscr)} \\
    &= \pi_j R_{j,i},
\end{align*}
as required.
\end{proof}

Next, we make use of our results about the connectedness and periodicity of \(\G\).

\begin{proposition}\label{prop:chain_classification}{\ }
\begin{enumerate}[(a)]
    \item\label{odds_classification}
If \(n\) is odd, 
then the non-projective summand random walk is reducible into two chains, one on the even states and one on the odd states, which are each irreducible and aperiodic.
    \item\label{evens_classification}
If \(n\) is even, 
then the non-projective summand random walk is irreducible and periodic with period \(2\).
\end{enumerate}
\end{proposition}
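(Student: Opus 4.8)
The plan is to deduce everything from the structural results about the graph $\G$ already established, together with \autoref{lemma:walk_on_a_graph}. The key link is the observation, made just before \autoref{lemma:walk_on_a_graph}, that the non-projective summand random walk is exactly the weighted random walk on $\G$ with vertex weights $u = w$ restricted to $[p-1]$; in particular its transition matrix $Q$ has the same zero/nonzero pattern as the adjacency matrix $A = A^{(n)}$, so the communicating classes of the chain coincide with the connected components of $\G$, and the chain is irreducible (on a component) exactly when that component is connected as a graph, and periodic with period dividing $2$ in the manner dictated by whether $\G$ is bipartite.

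For part \ref{odds_classification}, suppose $n$ is odd; since we have assumed $2 \leq n \leq p-2$ we in fact have $n > 1$, so \autoref{lemma:connected_components}\ref{odds_two_components} applies: $\G$ has exactly two connected components, the odd integers and the even integers. Hence the chain decomposes into two sub-chains, one on each component, and each is irreducible because each component is a connected graph. For aperiodicity it suffices to exhibit a loop (a self-edge) in each component, equivalently a diagonal nonzero entry of $A$ in each parity class: by \autoref{prop:parity_observations}\ref{tensor_form_of_A} we have $A = \bar A \otimes \left(\begin{smallmatrix} 0 & 1 \\ 1 & 0\end{smallmatrix}\right)^{n+1}$, and since $n$ is odd the second factor is the identity, so $A$ is block-diagonal with both diagonal blocks equal to $\bar A$; by \autoref{lemma:A_bar_props}\ref{nonzero_condition} the matrix $\bar A$ has a nonzero diagonal entry (at index $\tfrac{r+1}{2}$, with $r = n$ here, since the inequality $2\cdot 0 < n < 2(2i-1) < 2p-n$ holds when $i = \tfrac{n+1}{2}$), so each diagonal block of $A$ has a nonzero diagonal entry. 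Thus each sub-chain has a state with a self-loop and so has period $1$.

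For part \ref{evens_classification}, suppose $n$ is even; since $2 \leq n \leq p-2$ we have $n < p-1$, so \autoref{lemma:connected_components}\ref{evens_connected} gives that $\G$ is connected, hence the chain is irreducible. By \autoref{prop:parity_observations}\ref{evens_bipartite}, $\G$ is bipartite with the odd integers and the even integers as the two classes; a random walk on a connected bipartite graph has period exactly $2$ (every closed walk has even length, and closed walks of length $2$ exist since the graph has at least one edge), so the chain is periodic with period $2$. One may note that the absence of loops needed here also follows from \autoref{prop:parity_observations}\ref{tensor_form_of_A}: when $n$ is even the Kronecker factor is $\left(\begin{smallmatrix} 0 & 1 \\ 1 & 0 \end{smallmatrix}\right)$, so $A$ has zero diagonal blocks and in particular zero diagonal.

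None of this is hard once the graph-theoretic groundwork is in place; the only point requiring a little care is checking that the hypotheses $n > 1$ (odd case) and $n < p-1$ (even case) needed by \autoref{lemma:connected_components} are guaranteed by the standing assumption $2 \leq n \leq p-2$, and articulating cleanly why a self-loop forces aperiodicity while a connected loopless bipartite graph forces period exactly $2$. I would present the argument in two short paragraphs mirroring the two cases, citing \autoref{lemma:connected_components} for connectedness of the relevant components and \autoref{prop:parity_observations} for the bipartite/loop structure, with the aperiodicity in the odd case pinned down by the nonzero diagonal entry of $\bar A$.
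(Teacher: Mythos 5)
Your approach is the same as the paper's: deduce the irreducible components from \autoref{lemma:connected_components}, and deduce periodicity from the bipartite/loop structure of \(\G\) via \autoref{prop:parity_observations}. The period-\(2\) argument for \(n\) even is correct and matches the paper's. However, there is a genuine gap in the aperiodicity argument for \(n\) odd: you claim \(\bar A_{\frac{r+1}{2},\frac{r+1}{2}} = 1\) (with \(r = n\)) because the inequality \(0 < n < 2n < 2p - n\) holds, but the third part requires \(3n < 2p\), which fails whenever \(n \geq 2p/3\). For example, with \(p = 7\), \(n = 5\), one checks from \autoref{thm:CG} that \(V_5 \otimes V_5 \cong V_1 \oplus V_3 \oplus P_5 \oplus P_7\), so \(V_5\) is \emph{not} a summand and there is no loop at vertex \(5\) in \(\G\) — consistent with Figure 3(d), which shows loops at \(3\) and \(4\) only. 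So the loop is not at index \(\frac{r+1}{2}\) of \(\bar A\) in general.

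The paper avoids this pitfall by locating the loops directly in \(\G\): for \(n\) odd and \(2 \le n \le p-2\), condition \ref{symmetriccondition} of \autoref{lemma:symmetry_of_division} with \(i = j = l\) shows that vertex \(i\) has a loop precisely when \(\tfrac{n}{2} < i < p - \tfrac{n}{2}\), and both \(\tfrac{p-1}{2}\) and \(\tfrac{p+1}{2}\) always lie in this range; since exactly one of these is even and the other odd, each connected component of \(\G\) contains a loop. To repair your argument, replace the diagonal entry \(\frac{r+1}{2}\) of \(\bar A\) with one of \(\left\lfloor \frac{p+1}{4}\right\rfloor\) or \(\left\lceil \frac{p+1}{4}\right\rceil\) (whichever corresponds to vertex \(\frac{p-1}{2}\) or \(\frac{p+1}{2}\) under the map \(i \mapsto 2i - 1\)), and verify it directly from \autoref{lemma:A_bar_props}\ref{nonzero_condition}; or simply cite the loops at \(\frac{p\pm 1}{2}\) in \(\G\) as the paper does.
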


\begin{proof}[Proof\nopunct]
The description of the irreducible components follows immediately from the description of the connected components of \(\G\) in \autoref{lemma:connected_components}.

A walk on an undirected graph necessarily has period at most \(2\) (since any vertex can be revisited after two steps). The walk has period equal to \(2\) if and only if the graph contains no odd cycles and no loops, which is if and only if the graph is bipartite---and the walk is aperiodic otherwise.
Thus the periodicity claims follow from \autoref{prop:parity_observations}\ref{evens_bipartite} and the observation that when \(n\) is odd, each component of \(\G\) has loops (at \(\frac{p-1}{2}\) and \(\frac{p+1}{2}\)).
\end{proof}

\begin{remark}
Thus for \(n\) even,
the chain has a \emph{unique} stationary distribution but it does not necessarily converge to it.
Meanwhile, for \(n\) odd,
each subchain has a unique stationary distribution which it converges to, and the stationary distributions of the entire chain are precisely the convex combinations of these distributions.
\end{remark}

If \(w\) satisfies \(w(i)=w(p-i)\) for all \(i\), then \(Q\) has the same rotational symmetry as \(A\), and several of the results from \autoref{section:matrix} carry over.
Some of these results are helpful for identifying the remaining eigenvalues of \(Q\); the rate of convergence to equilibrium is determined by the second-largest (in absolute value) eigenvalue, so this in turn is helpful for finding the mixing time for the Markov chain.

Let \(\bar{Q}\) be the submatrix of (a conjugate of) \(Q\) defined analogously to \(\bar{A}\).

\begin{proposition}\label{prop:w(i)=w(p-i)_props}
Suppose \(w(i) = w(p-i)\) for all \(i\). Then:
\begin{enumerate}[(a)]
    \item\label{QT}
\(Q^{(n)} = TQ^{(p-n)} = Q^{(p-n)}T\);
    \item\label{TQT}
\(TQT = Q\);
    \item
the non-projective summand random walk is invariant under the relabelling \(i \mapsto p-i\);
    \item
if \(n\) is odd, the two irreducible subchains are isomorphic;
    \item
\(\bar{Q}^{(p-n)} = \bar{Q}^{(n)}\);
    \item\label{Kronecker_Q}
with the vertices are ordered as \(1, 3, \ldots, p-2, p-1, p-3, \ldots, 4, 2\), we have
\[
    Q = \bar{Q} \tensor \begin{pmatrix}
    0 & 1 \\
    1 & 0 \\
    \end{pmatrix}^{n+1};
\]
    \item\label{Q_eigenvalues}
if \(n\) is odd, every eigenvalue of \(Q\) has even multiplicity; if \(n\) is even, the eigenvalues of \(Q\) come in signed pairs;
    \item\label{mixing-time}
the non-projective summand random walk has mixing time bounded by
\[
    t_{\textnormal{mix}}(\epsilon) \leq \frac{1}{1-\lambda_\star} \log\left(\frac{1}{\epsilon \min_i(\pi_i)}\right)
\]
where \(\lambda_\star = \max\setbuild{\abs{\lambda}}{\text{\(\lambda \neq 1\) is an eigenvalue of \(\bar{Q}\)}}\).
\end{enumerate}
\end{proposition}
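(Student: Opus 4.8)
The plan is to transfer each structural fact about the matrices $A^{(n)}$ and $\bar A^{(n)}$ established in \autoref{section:matrix} to the transition matrices $Q^{(n)}$ and $\bar Q^{(n)}$, using the identity
\[
    Q^{(n)}_{i,j} = \frac{w(j)}{\sum_{il \in E(\G^{(n)})} w(l)}\, A^{(n)}_{i,j}
\]
recorded just before the statement. The only extra work compared with \autoref{section:matrix} is to track the normalising denominator, and this is exactly where the hypothesis $w(i) = w(p-i)$ is used: both for the factor $w(j)$ in the numerator and for the invariance of the weighted degree $\sum_l A^{(n)}_{i,l} w(l)$ under $l \mapsto p-l$.

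For part \ref{QT} I would compute $(Q^{(p-n)}T)_{i,j} = Q^{(p-n)}_{i,p-j}$ and $(TQ^{(p-n)})_{i,j} = Q^{(p-n)}_{p-i,j}$ directly from the identity above, using \autoref{prop:graph_symmetry}\ref{TA} (in the forms $A^{(n)}_{i,j} = A^{(p-n)}_{i,p-j} = A^{(p-n)}_{p-i,j}$), the relation $w(p-j) = w(j)$, and the weighted-degree invariance; in both cases one recovers $Q^{(n)}_{i,j}$. Part \ref{TQT} then follows by composing the two halves of \ref{QT} (or from the analogous direct computation using \autoref{prop:graph_symmetry}\ref{TAT}), and part (c) is merely \ref{TQT} rephrased, since relabelling the states by $i \mapsto p-i$ conjugates $Q$ by the self-inverse permutation matrix $T$. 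For part (d), when $n$ is odd the involution $i \mapsto p-i$ interchanges the set of odd vertices with the set of even vertices (as $p$ is odd), hence swaps the two connected components of $\G$ from \autoref{lemma:connected_components}\ref{odds_two_components}; by \ref{TQT} it also preserves every transition probability, so it is an isomorphism of the two irreducible subchains of \autoref{prop:chain_classification}\ref{odds_classification}.

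For part \ref{Kronecker_Q} I would rerun the proof of \autoref{prop:parity_observations}\ref{tensor_form_of_A}: since $Q^{(n)}$ has the same support as $A^{(n)}$, under the ordering $1,3,\ldots,p-2,p-1,p-3,\ldots,4,2$ its off-diagonal blocks vanish when $n$ is odd and its diagonal blocks vanish when $n$ is even (\autoref{prop:parity_observations}\ref{odds_disconnected}--\ref{evens_bipartite}), while \ref{TQT} forces the two surviving blocks to coincide; by definition that common block is $\bar Q^{(n)}$, which gives the stated Kronecker product. Part (e), $\bar Q^{(p-n)} = \bar Q^{(n)}$, then follows by comparing the entry formulas defining $\bar Q^{(n)}$ and $\bar Q^{(p-n)}$ using the second equality of \ref{QT} (noting that $p-n$ has the opposite parity to $n$, which is what swaps the ``odd'' and ``even'' cases of the definition). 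Part \ref{Q_eigenvalues} is then immediate from \ref{Kronecker_Q} and the rule that the spectrum of a Kronecker product $M \tensor N$ consists of the products of the eigenvalues: $\begin{psmallmatrix}0&1\\1&0\end{psmallmatrix}^{n+1}$ is $I_2$ (eigenvalues $1,1$) when $n$ is odd, doubling the multiplicity of every eigenvalue of $\bar Q$, and is $\begin{psmallmatrix}0&1\\1&0\end{psmallmatrix}$ (eigenvalues $1,-1$) when $n$ is even, producing the signed pairs.

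For part \ref{mixing-time} the plan is to invoke the standard spectral bound on the mixing time of a reversible, irreducible, aperiodic Markov chain (for instance the bound via the absolute spectral gap in the Levin--Peres--Wilmer textbook), which reads $t_{\textnormal{mix}}(\epsilon) \leq \frac{1}{1-\lambda_\star}\log\left(\frac{1}{\epsilon\,\min_i(\pi_i)}\right)$ with $\lambda_\star$ the largest modulus of a non-trivial eigenvalue, and to reduce both parities to the aperiodic chain $\bar Q$. Setting $r = n$ when $n$ is odd and $r = p-n$ when $n$ is even, part (e) gives $\bar Q^{(n)} = \bar Q^{(r)}$ with $r$ odd, and by \ref{Kronecker_Q} the chain $\bar Q^{(r)}$ is a copy of one of the two connected-component subchains of $Q^{(r)}$: it is reversible by \autoref{lemma:walk_on_a_graph}, irreducible and aperiodic by \autoref{prop:chain_classification}\ref{odds_classification}, and its subdominant eigenvalue is precisely $\lambda_\star$. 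When $n$ is odd the walk \emph{is} a pair of disjoint copies of $\bar Q$, so the bound transfers verbatim. When $n$ is even the walk is periodic with period $2$ (\autoref{prop:chain_classification}\ref{evens_classification}), so the bound should be applied to a lazy version of the chain, or equivalently routed through the two-step chain $(Q^{(n)})^2 = \bar Q^2 \tensor I_2$ coming from \ref{Kronecker_Q}; part \ref{Q_eigenvalues} ensures that the only eigenvalues of $Q^{(n)}$ of modulus $1$ are $\pm 1$ and that both arise from the eigenvalue $1$ of $\bar Q$, so the rate of convergence is still governed by $\lambda_\star$. I expect the main obstacle to be exactly this last point --- reconciling the definition of $t_{\textnormal{mix}}$ with the period-$2$ behaviour when $n$ is even; parts \ref{QT}--\ref{Q_eigenvalues} are otherwise routine bookkeeping built on \autoref{section:matrix}.
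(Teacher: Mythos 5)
Your proposal is correct and follows essentially the same route as the paper: parts (a)--(f) are carried over from the corresponding facts about $A$ and $\bar A$ in \autoref{section:matrix} by tracking the weight factor and the weighted degree (which is exactly what the hypothesis $w(i)=w(p-i)$ controls), part (g) is read off from the Kronecker factorisation in (f), and part (h) invokes the standard spectral bound from Levin--Peres--Wilmer, passing to the lazy chain when $n$ is even to remove the period-$2$ obstruction --- which is precisely what the paper does.
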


\begin{proof}[Proof\nopunct]
Statements \ref{QT}--\ref{Kronecker_Q} are entirely analogous to results in \autoref{section:matrix}, using \(w(i)=w(p-i)\) to deduce that the entries in the desired places of \(Q\) are not only nonzero but also equal.

Once we have the Kronecker product expression in \ref{Kronecker_Q}, we see immediately that if \(\bar{Q}\) has eigenvector-eigenvalue pairs \(\set{(v_1, \lambda_1), \ldots, (v_{\frac{p-1}{2}}, \lambda_{\frac{p-1}{2}})}\), then \(Q\) has eigenvector-eigenvalue pairs
\begin{alignat*}{2}
\setbuild{\left(v_i \tensor \begin{psmallmatrix}
    1 \\ 0 
    \end{psmallmatrix}, \lambda_i\right)}{1 \leq i \leq \tfrac{p-1}{2}}
        &\sqcup
    \setbuild{\left(v_i \tensor \begin{psmallmatrix}
    0 \\ 1 
    \end{psmallmatrix}, \lambda_i\right)}{1 \leq i \leq \tfrac{p-1}{2}} &\quad&\text{if \(n\) is odd;} \\
\setbuild{\left(v_i \tensor \begin{psmallmatrix}
    1 \\ 1 
    \end{psmallmatrix}, \lambda_i\right)}{1 \leq i \leq \tfrac{p-1}{2}}
        &\sqcup
    \setbuild{\left(v_i \tensor \begin{psmallmatrix}
    \phantom{-}1 \\ -1 
    \end{psmallmatrix}, -\lambda_i\right)}{1 \leq i \leq \tfrac{p-1}{2}} &\quad&\text{if \(n\) is even.}
\end{alignat*}
Both parts of \ref{Q_eigenvalues} then follow.

Note that \(\bar{Q}\) is the transition matrix for an irreducible aperiodic chain, so all its eigenvalues lie in \((-1,1]\) and the eigenvalue \(1\) has multiplicity \(1\); therefore \(\lambda_\star < 1\) and \(\lambda_\star\) is the second-largest eigenvalue (in absolute value) of \(\bar{Q}\).

If \(n\) is odd, \(\lambda_\star\) is therefore the second-largest eigenvalue (in absolute value) for each irreducible component of the chain.
If \(n\) is even, in order to eliminate periodicity, we define the lazy chain with transition matrix \(\frac12 (Q+I)\) (which converges at half the rate of the original chain); since the eigenvalues of \(\bar{Q}\) come in signed pairs, the lazy chain has second-largest eigenvalue \(\frac{\lambda_\star +1}{2}\).
Then the bound on the mixing time follows from \cite[Theorem 12.4, p.~163]{levin2017markov} (halving the mixing time of the lazy chain when \(n\) is even).
\end{proof}

In fact, for \(n\) even, the eigenvalues still come in signed pairs, regardless of the weighting: it is always the case that \(Q\) has nonzero entries only in the off-diagonal \(\frac{p-1}{2} \times \frac{p-1}{2}\) blocks, and if \(\begin{psmallmatrix}
u \\ v
\end{psmallmatrix}\) is an eigenvector with eigenvalue \(\lambda\) for such a matrix, then \(\begin{psmallmatrix}
\phantom{-}u \\ -v
\end{psmallmatrix}\) is an eigenvector with eigenvalue \(-\lambda\).

We conclude by exhibiting our results in the cases \(w \equiv 1\) and \(w(i)=i\).
Recall from \autoref{def:degree} that \(d(i)\) is the degree of \(i\) in \(\G\).

\begin{example}\label{eg:w_equiv_1}
Let \(w \equiv 1\). Then
\[
    Q_{i,j} = \frac{A_{i,j}}{d(i)}.
\]

This transition matrix is shown explicitly in \autoref{fig:trans_mats}. Of course, \(w(i) = w(p-i)\), and so \(Q\) satisfies \(TQT = Q\), and for \(n\) odd the the two irreducible subchains are isomorphic.

\begin{figure}
    \centering
\setlength{\padlen}{\widthof{\(\scriptstyle\frac{1}{p-n-1}\)}}
\setlength{\edgepadlen}{\widthof{\(\scriptstyle\frac{1}{p-n}\)}}
\renewcommand\arraystretch{1.3}
\makeatletter
\renewcommand\BA@colsep{0pt}
\makeatother
\newlength{\boundarylen}
\setlength{\boundarylen}{12pt}
\[
\begin{blockarray}{*{16}{c}}
\begin{block}{*{16}{>{\scriptstyle}c}}
&  1     &  2   & \cdots &       &       &   n   &       &       &       &       &       &       &       &      &\\
\end{block}
\begin{block}{>{\scriptstyle}c<{\hspace{\boundarylen}}(*{14}{>{\mathllap}c})>{\hspace{\boundarylen}\scriptstyle}c}
1&\edgepad&\pad&\edgepad&\pad  & \pad  &\pad[1]&  \pad & \pad  & \pad  &  \pad & \pad  &\edgepad& \pad &\edgepad&\\
2&     &       &       &       &\frac12&       &\frac12&       &       &       &       &       &       &        &\\
\vdots&&       &       &\frac13&       &\frac13&       &\frac13&       &       &       &       &       &        &\\
&      &       &\iddots&       &\iddots&       &\ddots &       &\ddots &       &       &       &       &        &\\
&      &\frac{1}{n-1}& &\frac{1}{n-1}& &\frac{1}{n-1}& &\frac{1}{n-1}& &\frac{1}{n-1}& &       &       &        &\\
n&\frac1n&     &\frac1n&       &\frac1n&       &\frac1n&       &\frac1n&       &\frac1n&       &       &        &\\
&      &\frac1n&       &\frac1n&       &\frac1n&       &\frac1n&       &\frac1n&       &\frac1n&       &        &\\
&      &       &\ddots &       &\ddots &       &\ddots &       &\ddots &       &\ddots &       &\ddots &        &\\
&      &       &       &\frac1n&       &\frac1n&       &\frac1n&       &\frac1n&       &\frac1n&       &\frac1n & p-n\\
&      &       &       &       &\frac{1}{n-1}& &\frac{1}{n-1}& &\frac{1}{n-1}& &\frac{1}{n-1}& &\frac{1}{n-1}&  &\\
&      &       &       &       &       &\ddots &       &\ddots &       &\iddots&       &\iddots&       &        &\\
&      &       &       &       &       &       &\frac13&       &\frac13&       &\frac13&       &       &        &\\
&      &       &       &       &       &       &       &\frac12&       &\frac12&       &       &       &        &\\
&      &       &       &       &       &       &       &       &   1   &       &       &       &       &        & p-1\\
\end{block}
\begin{block}{*{16}{>{\scriptstyle}c}}
&      &       &       &       &       &       &      &&\mathclap{\scriptstyle p-n}&& &  &&\mathclap{\scriptstyle p-1}&\\
\end{block}
\end{blockarray}
\]

\[
\begin{blockarray}{*{16}{c}}
\begin{block}{*{16}{>{\scriptstyle}c}}
&  1   &  2    & \cdots&       &       &       &       &       &   n   &       &       &       &       &      &\\
\end{block}
\begin{block}{>{\scriptstyle}c<{\hspace{\boundarylen}}(*{14}{>{\mathllap}c})>{\hspace{\boundarylen}\scriptstyle}c}
1&     &\pad   &       & \pad  & \pad  &  \pad & \pad  & \pad  &\pad[1]&  \pad & \pad  &       & \pad  &        &\\
2&     &       &       &       &       &       &       &\frac12&       &\frac12&       &       &       &        &\\
\vdots&&       &       &       &       &       &\frac13&       &\frac13&       &\frac13&       &       &        &\\
&      &       &       &       &       &\iddots&       &\iddots&       &\ddots &       &\ddots &       &        &\\
&      &       &       & &\scriptstyle\frac{1}{p-n-1}& &\scriptstyle\frac{1}{p-n-1}& &\scriptstyle\frac{1}{p-n-1}& &\scriptstyle\frac{1}{p-n-1}& &\scriptstyle\frac{1}{p-n-1}&  &\\
&      &   & &\scriptstyle\frac{1}{p-n}& &\scriptstyle\frac{1}{p-n}& &\scriptstyle\frac{1}{p-n}& &\scriptstyle\frac{1}{p-n}& &\scriptstyle\frac{1}{p-n}& &\scriptstyle\frac{1}{p-n} & p-n\\  
&      & &\scriptstyle\frac{1}{p-n}& &\scriptstyle\frac{1}{p-n}& &\scriptstyle\frac{1}{p-n}& &\scriptstyle\frac{1}{p-n}& &\scriptstyle\frac{1}{p-n}& &\scriptstyle\frac{1}{p-n}&  &\\
&      &\iddots&       &\iddots&       &\iddots&       &\iddots&       &\iddots&       &\iddots&       &        &\\      
n&\scriptstyle\frac{1}{p-n}& &\scriptstyle\frac{1}{p-n}& &\scriptstyle\frac{1}{p-n}& &\scriptstyle\frac{1}{p-n}& &\scriptstyle\frac{1}{p-n}& &\scriptstyle\frac{1}{p-n}& &   &  & &\\
&      &\scriptstyle\frac{1}{p-n-1}& &\scriptstyle\frac{1}{p-n-1}& &\scriptstyle\frac{1}{p-n-1}& &\scriptstyle\frac{1}{p-n-1}& &\scriptstyle\frac{1}{p-n-1}& &       &       &       &\\   
&      &       &\ddots&       &\ddots&       &\iddots &       &\iddots &       &       &       &       &       &\\
&      &       &       &\frac13&       &\frac13&       &\frac13&       &       &       &       &       &       &\\
&      &       &       &       &\frac12&       &\frac12&       &       &       &       &       &       &       &\\
&      &       &       &       &       &   1   &       &       &       &       &       &       &       &       &p-1\\
\end{block}
\begin{block}{*{16}{>{\scriptstyle}c}}
&      &       &       & &&\mathclap{\scriptstyle p-n}&& &       &       &       & &&\mathclap{\scriptstyle p-1}&\\
\end{block}
\end{blockarray}
\]
    \caption{The transition matrix \(Q\) when \(w \equiv 1\), in the cases \(2n < p\), top, and \(2n >p\),  bottom}
    \label{fig:trans_mats}
\end{figure}

By \autoref{lemma:d(i)values} and \autoref{lemma:walk_on_a_graph}, a stationary distribution is
\[
    \pi_i = \frac{\min\set{i,p-i,n,p-n}}{n(p-n)}.
\]

Observe that \(\pi T = \pi\).
In particular, this stationary distribution assigns equal probability to being on an even or an odd state; that is,
\[
    \longsubsum{\sum}{i \equiv 0 \pmod*{2}}{\pi_i} = \longsubsum{\sum}{i \equiv 1 \pmod*{2}}{\pi_i} = \frac12.
\]
Thus, for \(n\) even, the chain converges to the stationary distribution, provided that the initial distribution \(\nu\) has equal weighting for even and odd states or that the chain is made lazy by taking the transition matrix to be \(\frac12 (Q+I)\).
Meanwhile, for \(n\) odd, \(\pi\) is the stationary distribution with equal weighting given to the even-state and odd-state walks.

If \(n \in \set{\frac{p-1}{2}, \frac{p+1}{2}}\), it can be shown that the eigenvalues of \(\bar{Q}\) are
\[
\set{1, -\tfrac12, \tfrac13, \ldots, (-1)^{\frac{p+1}{2}}\tfrac{2}{p-1}}.
\]
Then (by the proof of \autoref{prop:w(i)=w(p-i)_props}\ref{Q_eigenvalues}) the eigenvalues of \(Q\) are the eigenvalues in this set each with multiplicity \(2\) if \(n\) is odd, and are \(\set{\pm 1, \pm \frac12, \ldots \pm\frac{2}{p-1}}\) if \(n\) is even.
Then by \autoref{prop:w(i)=w(p-i)_props}\ref{mixing-time}, the mixing time of the walk is bounded by
\[
    t_{\textnormal{mix}}(\epsilon) \leq 
        2 \log\left(\tfrac{p^2-1}{4\epsilon}\right).
\]
\end{example}

\begin{example}\label{eg:weighted_by_dimension}
Suppose \(w(i)=i\) for each \(i\); that is, each module has a chance of being chosen proportional to its dimension.
Then for fixed \(i\) we have
\begin{align*}
\longsubsum{\sum}{ij \in E(\G)}{j}
    &= \text{(number of neighbours of \(i\))} \times \text{(average value of the neighbours of \(i\))} \\
    &= d(i) \times \mean\setbuild{j}{\simpledivide{j}{i}{n}}.
\end{align*}
If \(i+n \leq p\), all of the composition factors of \(V_i \tensor V_n\) are summands, and so their average dimension is \(\max\set{i, n}\), the midpoint of the \((i,n)\)-string or the \((n,i)\)-string (as appropriate).
If \(i+n \geq p\), the midpoint of the relevant section of the string is instead
\[
    \frac{ (\abs{i-n}+1) + (2p-(i+n-1)) }{2} = p - \min\set{i, n}.
\]
Also, by \autoref{lemma:d(i)values},
\[
d(i) = \begin{cases}
\min\set{i, n} & \text{ if \(i+n \leq p\),} \\
p - \max\set{i, n} & \text{ if \(i+n \geq p\).}
\end{cases}
\]
Thus
\begin{align*}
\longsubsum{\sum}{ij \in E(\G)}{j}
    &= \begin{cases}
    d(i) \max\set{i, n}        & \text{ if \(i+n \leq p\),}\\
    d(i) (p-\min\set{i, n})    & \text{ if \(i+n \geq p\)}
\end{cases} \\
    &= \begin{cases}
    in          & \text{ if \(i+n \leq p\),}\\
    (p-i)(p-n)  & \text{ if \(i+n \geq p\).}
\end{cases}
\end{align*}
Then
\[
    Q_{i,j} = \begin{cases}
    \frac{j}{in} & \text{if \(i+n \leq p\) and \(A_{i,j} \neq 0\),} \\
    \frac{j}{(p-i)(p-n)} & \text{if \(i+n \geq p\) and \(A_{i,j} \neq 0\),} \\
    0 & \text{otherwise.}
    \end{cases}
\]

It can be shown that \(\sum_{i \in [p-1]} \sum_{ij \in E(\G)} j = \frac{1}{6} np(p-n)(2p-n)\).
Then by \autoref{lemma:walk_on_a_graph} a stationary distribution is
\[
\pi_i = \begin{dcases}
    \frac{6i^2}{p(p-n)(2p-n)} & \text{ if \(i+n \leq p\),} \\
    \frac{6i(p-i)}{np(2p-n)} & \text{ if \(i+n \geq p\).}
\end{dcases}
\]

Now \(w(i) \neq w(p-i)\) (for all \(i\)), and so we do not have that the walk is invariant under the map \(i \mapsto p-i\). In particular, the two irreducible chains when \(n\) is odd are not isomorphic.
\end{example}

\section*{Acknowledgements}

The author would like to thank Mark Wildon (Royal Holloway, University of London) for his suggestion of this topic and his guidance whilst researching it.

This is a post-peer-review, pre-copyedit version of an article published in Algebras and Representation Theory.
The final authenticated
version is available online at \href{https://doi.org/10.1007/s10468-021-10034-0}{https://doi.org/10.1007/s10468-021-10034-0}.

\bibliographystyle{alpha}
\bibliography{references}

\begin{thebibliography}{BDLT20}

\bibitem[Alp86]{alperin_1986}
J.~L. Alperin.
\newblock {\em Local Representation Theory: Modular Representations as an
  Introduction to the Local Representation Theory of Finite Groups}.
\newblock Cambridge Studies in Advanced Mathematics. Cambridge University
  Press, 1986.

\bibitem[AP95]{andersen1995}
Henning~Haahr Andersen and Jan Paradowski.
\newblock Fusion categories arising from semisimple lie algebras.
\newblock {\em Comm. Math. Phys.}, 169(3):563--588, 1995.

\bibitem[BDLT20]{benkart2020tensor}
Georgia Benkart, Persi Diaconis, Martin~W. Liebeck, and Pham~Huu Tiep.
\newblock Tensor product {M}arkov chains.
\newblock {\em Journal of Algebra}, 561:17--83, 2020.

\bibitem[Cra07]{craven2008thesis}
David~Andrew Craven.
\newblock {\em Algebraic modules for finite groups}.
\newblock PhD thesis, University of Oxford, 2007.

\bibitem[EH02]{erdmann_henke_2002}
Karin Erdmann and Anne Henke.
\newblock On {Ringel} duality for {Schur} algebras.
\newblock {\em Mathematical Proceedings of the Cambridge Philosophical
  Society}, 132(1):97--116, 2002.

\bibitem[Glo78]{GLOVER1978425}
D.~J. Glover.
\newblock A study of certain modular representations.
\newblock {\em Journal of Algebra}, 51(2):425--475, 1978.

\bibitem[Kou90]{Kouwenhoven1990}
Frank~M. Kouwenhoven.
\newblock The \(\lambda\)-structure of the {G}reen ring of
  \(\operatorname{GL}(2, \mathbb{F}_p)\) in characteristic p, {I}.
\newblock {\em Communications in Algebra}, 18(6):1645--1671, 1990.

\bibitem[LP17]{levin2017markov}
David~A. Levin and Yuval Peres.
\newblock {\em Markov Chains and Mixing Times, Second Edition}.
\newblock American Mathematical Society, 2017.

\bibitem[Mat00]{mathieu2000}
Olivier Mathieu.
\newblock Tilting modules and their applications.
\newblock In {\em Analysis on Homogeneous Spaces and Representation Theory of
  Lie Groups, Okayama-Kyoto}, pages 145--212, Tokyo, Japan, 2000. Mathematical
  Society of Japan.

\bibitem[PR13]{Pistone2013}
Giovanni Pistone and Maria~Piera Rogantin.
\newblock The algebra of reversible {M}arkov chains.
\newblock {\em Annals of the Institute of Statistical Mathematics},
  65(2):269--293, 2013.

\end{thebibliography}

\end{document}